\documentclass[oneside,english]{amsart}
\usepackage[T1]{fontenc}
\usepackage[latin9]{inputenc}
\usepackage{amsthm}
\usepackage{amstext}
\usepackage{amssymb}
\usepackage{esint}

\makeatletter
\numberwithin{equation}{section}
\numberwithin{figure}{section}
\theoremstyle{plain}
\newtheorem{thm}{\protect\theoremname}[section]

\theoremstyle{plain}
\theoremstyle{definition}
\newtheorem{defn}[thm]{\protect\definitionname}
\theoremstyle{plain}
\newtheorem{lem}[thm]{\protect\lemmaname}
\newtheorem{cor}[thm]{\protect\corollaryname}
\theoremstyle{plain}
\newtheorem{rem}[thm]{\protect\remarkname}
\theoremstyle{plain}
\newtheorem{exa}[thm]{\protect\examplename}
\theoremstyle{plain}
\makeatother

\usepackage{babel}
\providecommand{\definitionname}{Definition}
\providecommand{\lemmaname}{Lemma}
\providecommand{\theoremname}{Theorem}
\providecommand{\corollaryname}{Corollary}
\providecommand{\remarkname}{Remark}
\providecommand{\propositionname}{Proposition}
\providecommand{\examplename}{Example}
	
\DeclareMathOperator{\loc}{loc}

\DeclareMathOperator{\ess}{ess}
\DeclareMathOperator{\diam}{diam}
\DeclareMathOperator{\adj}{adj}
\DeclareMathOperator{\cp}{cap}

\begin{document}

\title[The spectral estimates for the Neumann-Laplace operator]
{The spectral estimates for the Neumann-Laplace operator in space domains}

\author{V.~Gol'dshtein, A.~Ukhlov}
\begin{abstract}

In this paper we prove discreteness of the spectrum of the Neu\-mann-Lap\-la\-ci\-an (the free membrane problem) in a large class of non-convex space domains. The  lower estimates of the first non-trivial eigenvalue are obtained in terms of geometric characteristics of Sobolev mappings. The suggested approach is based on Sobolev-Poincar\'e inequalities that are obtained with the help of a geometric theory of composition operators on Sobolev spaces. These composition operators  are induced by generalizations of conformal mappings that is called as mappings of bounded $2$-distortion (weak $2$-quasiconformal mappings).

\end{abstract}
\maketitle
\footnotetext{\textbf{Key words and phrases:} Sobolev spaces, elliptic equations, quasiconformal mappings} \footnotetext{\textbf{2000
Mathematics Subject Classification:} 46E35, 35P15, 30C65.}

\section{Introduction }

The classical upper estimate for the first nontrivial Neumann eigenvalue of the Laplace operator   
$$
\mu_1(\Omega)\leq \mu_1(\Omega^{\ast})=\frac{p^2_{n/2}}{R^2_{\ast}}
$$
was proved by Szeg\"o \cite{S54} for simply connected planar domains via a conformal mappings technique ("the method of conformal normalization") and by Weinberger  \cite{W56} for  domains in $\mathbb{R}^n$. In this inequality $p_{n/2}$ denotes the first positive zero of the function $(t^{1-n/2}J_{n/2}(t))'$, and $\Omega^{\ast}$ is an $n$-ball of the same $n$-volume as $\Omega$ with $R_{\ast}$ as its radius. In particular, if $n=2$ we have $p_1=j_{1,1}'\approx1.84118$ where $j_{1,1}'$ denotes the first positive zero of the derivative of the Bessel function $J_1$. 

More detailed upper estimates for planar domains  were obtained in \cite{PS51} and \cite{H65} via "the method of conformal normalization". The upper estimates of the Laplace eigenvalues with the help of different techniques were intensively studied in the recent decades, see, for example, \cite{ A98, AB93, AL97, EP15, LM98}. 

Situation with lower estimates is more complicated. The classical result by Payne and Weinberger  \cite{PW} states that in convex domains $\Omega\subset\mathbb R^n$, $n\geq 2$ 
$$
\mu_1(\Omega)\geq \frac{\pi^2}{d(\Omega)^2},
$$
where $d(\Omega)$ is a diameter of a convex domain $\Omega$. 
Unfortunately in non-convex domains $\mu_1(\Omega)$ can not be estimated in the terms of Euclidean diameters. It can be seen by considering a domain consisting of two identical squares connected by a thin corridor \cite{BCDL16}. In \cite{BCT9, BCT15} lower estimates involved the isoperimetric constant relative to $\Omega$ were obtained. 

In the works  \cite{GU15, GU15_2} we returned to a conformal mappings techniques and obtained lower estimates of $\mu_1(\Omega)$  in the terms of the hyperbolic (conformal) radius of $\Omega$ for a large class of general (non necessary convex) domains $\Omega\subset\mathbb R^2$. For example, this class includes some domains with fractal boundaries which Hausdorff dimension can be any number of the half interval $[1,2)$.

Our method is different from "the method of conformal normalization" and based on the variational formulation of spectral problems and on the geometric theory of composition operators  on Sobolev spaces, developed in our previous papers \cite{GGR,U1,UV10,VGR}. Roughly speaking we "transferred" known estimates (from convex Lipschitz domains) to "general" domains with a help of composition operators induced by conformal mappings.

The variational formulation of the spectral problem for the Laplace operator is usually based on the Dirichlet (energy) integral
$$
\|u \mid L^1_2(\Omega)\|^2=\int\limits_{\Omega}|\nabla u(x)|^2~dx,
$$ 
and was established in \cite{R94} by Lord Rayleigh.

In the present work we suggest lower estimates of the first nontrivial Neumann eigenvalues for a large class of non-convex spaces domains using a natural generalization of conformal mappings that we call weak $2$-quasiconformal mappings (topological mappings of bounded $2$-distortion) $\varphi:\Omega \to \widetilde{\Omega} $ that induce bounded composition operator $\varphi^{\ast}: L^1_2(\widetilde{\Omega}) \rightarrow  L^1_2(\Omega)$ \cite{GGR}. Here $\Omega$ is a Lipschitz domain (where discreteness of the spectrum is known) and $\widetilde{\Omega}$ is its image that can be a much more irregular domain. Our method is based on applications of the geometric theory of composition operators on Sobolev spaces to Sobolev-Poincar\'e inequalities \cite{GGu,GU}.

As a motivation for  "naturalness" of the class of weak $2$-quasiconformal homeomorphisms  let us check firstly how the energy integrals are changed under diffeomorphisms $\varphi: \Omega\to\widetilde{\Omega}$. By the chain rule:
$$
\int\limits_{\Omega}|\nabla u\circ\varphi(x)|^2~dx \leq
\int\limits_{\Omega}|\nabla u|^2(\varphi(x))\frac{|D\varphi(x)|^2}{|J(x,\varphi)|}|J(x,\varphi)|dx .
$$
If the point-wise dilatation 
$$
K(x,\varphi)=\frac{|D\varphi(x)|^2}{|J(x,\varphi)|}
$$ 
is bounded a.~e. in $\Omega$, then by the classical change of variable in the Lebesgue integral formula we obtain: 

\begin{multline}
\|u\circ\varphi \mid L^1_2(\Omega)\|=\left(\int\limits_{\Omega}|\nabla u\circ\varphi(x)|^2~dx\right)^{\frac{1}{2}}
\\
\leq\ess\sup\limits_{x\in\Omega}\left(\frac{|D\varphi(x)|^2}{|J(x,\varphi)|}\right)^{\frac{1}{2}}
\left(\int\limits_{\tilde{\Omega}}|\nabla u|^2~dy\right)^{\frac{1}{2}}=
\ess\sup\limits_{x\in\Omega}\left(K(x,\varphi)\right)^{\frac{1}{2}}\|u\mid L^1_2(\widetilde{\Omega})\|.
\nonumber
\end{multline}

Here the value $\ess\sup\limits_{x\in\Omega}\left(K(x,\varphi)\right)^{\frac{1}{2}}$ is an upper bound of the norm of the composition operator $\varphi^{\ast}$ on Sobolev spaces $L^1_2$ defined by the composition rule $\varphi^{\ast}(f)=f\circ\varphi$.

Let us call the quantity
$$
K_p(\Omega)=\left(\ess\sup\limits_{x\in\Omega}\frac{|D\varphi(x)|^p}{|J(x,\varphi)|}\right)^{\frac{1}{p}}, \,\,1\leq p<\infty,
$$
the $p$-dilatation of $\varphi$. Sobolev mappings with bounded $p$-dilatation were called weak $p$-quasi\-con\-for\-mal  \cite{GGR} or mappings of bounded $p$-distortion \cite{UV10}. Only these mappings generate (by the composition rule) bounded composition operators  on the Sobolev spaces $L^1_p$ 
$$
\varphi^{\ast}: L^1_p(\widetilde{\Omega})\to L^1_p(\Omega), \,\,1\leq p<\infty.
$$
 
We are focus here on the case $p=2$  for the study of spectral properties of the Laplace operator. 

Let us give few essential remarks:

1) In planar domains the equality
$$
\frac{|D\varphi(x)|^2}{J(x,\varphi)}=1, \,\,\text{for all}\,\,x\in\Omega,
$$
is one of the classical definitions of the conformal mappings. In this case
$$
\|u\circ\varphi \mid L^1_2(\Omega)\|=\|u\mid L^1_2{\widetilde{\Omega}}\|,
$$
i.~e. any conformal mapping induces an isometry of $L^1_2$-spaces. Unfortunately, for dimension more then two, isometries of $L^1_2$-spaces can be induced by isometries of $\mathbb{R}^n$ only. 

2) In planar domains boundedness of 
$$
K_2(\Omega)=\left(\ess\sup\limits_{x\in\Omega}\frac{|D\varphi(x)|^2}{|J(x,\varphi)|}\right)^{\frac{1}{2}}
$$
is one of classical definitions of quasiconformal mappings. 

3) The class of space mappings of bounded $2$-distortion is more flexible then the class of space quasiconformal homeomorphisms ($n$-quasiconformal in our notation). For example, there exists  a mapping of bounded $2$-distortion of the unit ball onto a ridge \cite{GGR}. 

4) In the space $\mathbb R^3$ boundedness of $K_2(\Omega)$ can be expressed as a boundedness of the ratio of the differential and co-differential and it permit us to call alternatively homeomorphisms of bounded $2$-distortion as {\it co-quasiconformal mappings}.

By our opinion the class of mappings of bounded $2$-distortion (weak co-quasi\-con\-for\-mal mappings) is very natural for study of spectral problems for elliptic operators in space domains. Using these mappings we prove solvability of the spectral problem in space domains with  H\"older singularities and we obtain lower estimates of the first non-trivial Neumann eigenvalue for Laplacian in non-convex space domains, in particular, in domains with H\"older singularities.

The proposed "transfer" procedure is based on the commutative diagram:
\[\begin{array}{rcl}
L^{1}_{2}(\widetilde{\Omega}) & \stackrel{\varphi^*}{\longrightarrow} & L^1_2(\Omega) \\[2mm]
\multicolumn{1}{c}{\downarrow} & & \multicolumn{1}{c}{\downarrow} \\[1mm]
L_2(\widetilde{\Omega}) & \stackrel{(\varphi^{-1})^*}{\longleftarrow} & L_2(\Omega).
\end{array}\]

Here horizontal arrows correspond to composition operators for Sobolev space $L^{1}_{2}$ and Lebesgue spaces $L_2$, vertical arrows correspond to Poincar\'e inequalities.

The main results will be discussed in the next section. Let us remark only that the "transfer"' procedure permit us to prove discreteness of the spectrum and give estimates for the first nontrivial  Neumann eigenvalues of the Laplace operator in domains with anisotropic H\"older singularities. 

The paper is organized as follows:
Basic definitions, few main results and some applications of main results are presented in section 2. In section 3 we discuss applications of composition operators to the Sobolev-Poincar\'e inequalities and apply them to lower estimates of the first nontrivial Neumann eigenvalue for Laplace operator. In section 4 we apply main results to domains with anisotropic 
H\"older singularities. Section 5 is devoted to an extension of main results to Neumann eigenvalues of $p$-Laplace operators. In Section 6 we discuss some necessary properties of mappings of bounded $p$-dilatation.

\section{The eigenvalue problem and main results}


Let  $\Omega\subset\mathbb R^n$ be a bounded Lipschitz domain (an open connected set).  The Neumann eigenvalue problem
for the Laplace operator is:
\begin{equation}
\label{Laplace}
\begin{cases}
-\operatorname{div}\left(\nabla u\right)=\mu u& \text{in}\,\,\,\,\,\Omega\\
 \frac{\partial u}{\partial n}=0& \text{on}\,\,\,\partial\Omega.
\end{cases}
\end{equation}

The weak statement of this spectral problem is as follows: a function
$u$ solves the previous problem iff $u\in W^{1}_{2}(\Omega)$ and 
\[
\int\limits _{\Omega} \nabla u(x)\cdot\nabla v(x)~dx=\mu \int\limits _{\Omega}u(x)v(x)~dx
\]
for all $v\in W^{1}_{2}(\Omega)$. This statement is correct in any bounded domain $\Omega\subset\mathbb R^n$.

Recall that the Sobolev space $W^1_p(\Omega)$, $1\leq p<\infty$, is defined
as a Banach space of locally integrable weakly differentiable functions
$f:\Omega\to\mathbb{R}$ equipped with the following norm: 
\[
\|f\mid W^1_p(\Omega)\|=\biggr(\int\limits _{\Omega}|f(x)|^{p}\, dx\biggr)^{\frac{1}{p}}+
\biggr(\int\limits _{\Omega}|\nabla f(x)|^{p}\, dx\biggr)^{\frac{1}{p}}.
\]

The homogeneous Sobolev space $L^1_p(\Omega)$, $1\leq p<\infty$, is defined as a seminormed space
of locally integrable weakly differentiable functions $f:\Omega\to\mathbb{R}$ equipped
with the following seminorm: 
\[
\|f\mid L^1_p(\Omega)\|=\biggr(\int\limits _{\Omega}|\nabla f(x)|^{p}\, dx\biggr)^{\frac{1}{p}}.
\]

\begin{rem} We suppose that any function $u$ from the spaces $W^1_p$ or $L^1_p$ is defined $p$-quasi-everywhere by its Lebesgue value (so-called refined or $p$-quasi-continuous functions) i.e. is defined everywhere outside of a set of $p$-capacity zero. For example, it means that for $p>n$ any function $u\in L^1_p$ is continuous. This result appeared first in \cite{MH}.
\end{rem}

By the Min-Max Principle (see, for example, \cite{CG}) the first nontrivial Neumann eigenvalue for the Laplacian can be characterized as
$$
\mu_1(\Omega)=\min\left\{\frac{\int\limits_{\Omega}|\nabla u(x)|^2~dx}{\int\limits_{\Omega}|u(x)|^2~dx}: u\in W^{1}_{2}(\Omega)\setminus\{0\},\,
\int\limits_{\Omega}u~dx=0\right\}.
$$

Hence $\mu_1(\Omega)^{-\frac{1}{2}}$ is the best constant $B_{2,2}(\Omega)$  in the following Poincar\'e inequality
$$
\inf\limits_{c\in\mathbb R}\|f-c \mid L_2(\Omega)\|\leq B_{2,2}(\Omega)\|\nabla f \mid L_2(\Omega)\|,\,\,\, f\in W^{1}_{2}(\Omega).
$$

The calculation of $\mu_1(\Omega)$ is possible in very limited cases. Hence, the problem of estimates of the first nontrivial Neumann eigenvalue for Laplacian is a significant problem in the modern analysis of PDE.

In the present work we suggest the lower estimates the Neumann eigenvalues in spaces domains via derivatives of weak $2$-quasiconformal mappings.
Using the geometric theory of composition operators on Sobolev spaces (i.e. the theory of mappings of bounded $p$-distortion) we prove the following result for 
a general class of domains:

\vskip 0.5cm

{\bf Theorem A.} {\it  
Suppose that there exists a weak $2$-quasiconformal homeomorphism  $\varphi: \Omega\to\widetilde{\Omega}$ of a bounded Lipschitz  domain 
$\Omega\subset\mathbb R^n$ onto $\widetilde{\Omega}$, such that 
$$
M_2(\Omega)=\operatorname{ess}\sup\limits_{x\in \Omega}\left|J(x,\varphi)\right|^{\frac{1}{2}}<\infty.
$$
Then the spectrum of Neumann-Laplace operator in $\widetilde{\Omega}$ is discrete, can be written in the form
of a non-decreasing sequence
\[
0=\mu_0(\widetilde{\Omega})<\mu_{1}(\widetilde{\Omega})\leq\mu_{2}(\widetilde{\Omega})\leq...\leq\mu_{n}(\widetilde{\Omega})\leq...\,,
\]
and
$$
\frac{1}{\mu_1(\widetilde{\Omega})}\leq K^2_{2}(\Omega) M^2_2(\Omega)\frac{1}{\mu_1(\Omega)}.
$$
}

\vskip 0.5cm

Here $K_{2}(\Omega)$ is the coefficient of a weak $2$-quasiconformality of the homeomorphism $\varphi:\Omega\to\widetilde{\Omega}$ and $J(x,\varphi)$ is the determinant of the Jacobi matrix of $\varphi$ at $x$.

Recall that a homeomorphism $\varphi:\Omega\to\widetilde{\Omega}$ is called weak $p$-quasiconformal \cite{GGR}, $1\leq p < \infty$, if $\varphi\in W^1_{1,\loc}(\Omega)$,
has finite distortion and the quantity (that we call the coefficient of weak $p$-quasiconformality) 
$$
K_p(\Omega)=\left(\ess\sup\limits_{x\in\Omega}\frac{|D\varphi(x)|^p}{|J(x,\varphi)|}\right)^{\frac{1}{p}}
$$
is finite. 

This Theorem~A is not strong enough for H\"older type singularities. The corresponding strong version (Theorem C) will be formulated and proved later. Here we demonstrate a corollary of Theorem C for comparatively simple case. Denote by $H_1$ the standard $n$-dimensional simplex, $n\geq 3$,
$$
H_1:=\{ x\in\mathbb R^n : n \geq 3, 0<x_n<1, 0<x_i<x_n,\,\,i=1,2,\dots,n-1\}.
$$

\vskip 0.5cm

{\bf Theorem B.} {\it  
Let 
$$
H_g:=\{ x\in\mathbb R^n : n \geq 3, 0<x_n<1, 0<x_i<x_n^{\gamma_i},
\,i=1,2,\dots,n-1\}
$$  
$\gamma_i \geq 1$, $\gamma:=1+\sum_{i=1}^{n-1}\gamma_i$ $, g:=(\gamma_1,...,\gamma_{n-1})$ .

Then the spectrum of the Neumann-Laplace operator in the domain $H_g$ is discrete, can be written in the form
of a non-decreasing sequence
\[
0=\mu_0(H_g)<\mu_{1}(H_g)\leq\mu_{2}(H_g)\leq...\leq\mu_{n}(H_g)\leq...\,,
\]
and for any $r>2$ the following inequality holds:
\begin{multline}
\frac{1}{\mu_1(H_g)}\leq \\\inf\limits_{a}
\left(a^2(\gamma_1^2+...+\gamma_{n-1}^2+1)-2a\sum_{i=1}^{n-1}\gamma_i\right)
a\biggl(\int\limits _{H_1}\left(x_n^{a\gamma-n}\right)^{\frac{r}{r-2}}~dx\biggl)^{\frac{r-2}{r}}
B^2_{r,2}(H_1),
\nonumber
\end{multline}
where $(2n)/(\gamma r)<a\leq (n-2)/(\gamma-2)$ and $B_{r,2}(H_1)$ is the best constant in the $(r,2)$-Sobolev-Poincar\'e inequality in the domain $H_1$.
}
\vskip 0.5cm

Theorem~B will be proved in Section 4.

Note, that space quasiconformal mappings with the additional assumption of local Lipschitz condition on the inverse mapping:
$$
\lim\sup\limits_{y\to x}\frac{\left|\varphi^{-1}(x)-\varphi^{-1}(y)\right|}{|x-y|}\leq K<\infty
$$ 
are weak $p$-quasiconformal mappings for all $1\leq p\leq n$. It follows from simple calculations: 
$$
\left(\frac{|D\varphi(x)|^p}{|J(x,\varphi)|}\right)^{\frac{1}{p}}=
\left(\frac{|D\varphi(x)|^n}{|J(x,\varphi)|}|D\varphi(x)|^{p-n}\right)^{\frac{1}{p}}\leq K^{\frac{n}{p}}_n(\Omega)K^{\frac{n-p}{p}}<\infty.
$$

Another example of weak $p$-quasiconformal mappings are bi-Lipschitz homeomorphisms. Recall that a homeomorphism 
$\varphi:\Omega\to\Omega'$ is called a bi-Lipschitz homeomorphism if there exists a constant $0<K<\infty$ such that
$$
\frac{1}{K}\leq \lim\sup\limits_{y\to x}\frac{\left|\varphi(x)-\varphi(y)\right|}{|x-y|}\leq K.
$$ 
Then
$$
\left(\frac{|D\varphi(x)|^p}{|J(x,\varphi)|}\right)^{\frac{1}{p}}\leq
\left({K^{p+n}}\right)^{\frac{1}{p}}<\infty.
$$

Remark that a homeomorphism $\varphi:\Omega\to\Omega'$ generates an isomorphism of Sobolev spaces $W_{p}^{1}(\Omega)$ and $W_{p}^{1}(\Omega')$, $1\leq p<n$, if and only if $\varphi$ is a bi-Lipschitz homeomorphism \cite{Mar}. 

Let us give a simple illustration of Theorem~A.  Consider the ellipse $E\subset\mathbb R^2$:
$$
E=\left\{(x,y)\in R^2 : \frac{x^2}{a^2}+\frac{y^2}{b^2}\leq 1,\,\, a\geq b\right\}.
$$
The linear mapping 
$$
\varphi_l(x,y)=\left(\begin{array}{cc}
a & 0  \\
0 & b 
\end{array} \right)
\left(\begin{array}{cc}
a \\
b 
\end{array} \right)
$$
maps the unit disc $\mathbb D$ onto $E$. By definitions
$$
K^2_2(\mathbb D)=\frac{a^2}{ab},\,\,\text{and}\,\,M^2_2(\mathbb D)=ab.
$$
Hence by the Theorem~A
$$
\mu_1(E)\geq \frac{1}{K^2_{2}(\mathbb D) M^2_2(\mathbb D)}\mu_1(\mathbb D)=\frac{(j'_{1,1})^2}{a^2},
$$
where $j'_{1,1}$ is the first positive zero of the derivative of the Bessel function $J_1$.

This estimate is better then the classical estimate for convex domains \cite{PW}
$$
\mu_1(E)\geq \frac{\pi^2}{d(E)^2},
$$
because $d(E)=2a$ and $2j'_{1,1}>\pi$.

\begin{rem} In our recent works we studied
composition operators on Sobolev spaces defined in planar domains
and induced by conformal mappings \cite{GU1}, i.e. conformal composition operators on Sobolev spaces. It permitted us to prove existence of weighted Sobolev embeddings \cite{GU2,GU3} with
universal conformal weights for simply connected planar domains. Another application of the conformal composition operators to spectral
stability problems for so-called conformal regular domains can be found in \cite{BGU1}.
\end{rem}

\section{Sobolev-Poincar\'e inequalities for functions of $L^{1}_{p}(\Omega)$}

\subsection{Composition Operators on Lebesgue Spaces}

A mapping $\varphi:\Omega\to\mathbb{R}^{n}$ is weakly differentiable on $\Omega$, if its coordinate functions have weak derivatives on $\Omega$. Hence its formal Jacobi
matrix $D\varphi(x)$ and its determinant (Jacobian) $J(x,\varphi)$
are well defined at almost all points $x\in\Omega$. The norm $|D\varphi(x)|$
of the matrix $D\varphi(x)$ is the norm of the corresponding linear
operator. We will use the same notation for this matrix and the corresponding
linear operator.

A mapping $\varphi:\Omega\to\mathbb{R}^{n}$ possesses the Luzin $N$-property if an image of any set of measure zero has measure zero. Any Lipschitz mapping possesses the
Luzin $N$-property.

The following theorem  about composition operator on Lebesgue spaces is well known (see, for example \cite{VU1}): 

\begin{thm}
\label{thm:LpLq} Let a
homeomorphism $\varphi:\Omega\to\widetilde{\Omega}$ between two domains $\Omega$ and $\widetilde{\Omega}$
be weakly differentiable. Then the
composition operator 
\[
(\varphi^{-1})^{\ast}:L_{r}(\Omega)\to L_{s}(\widetilde{\Omega}),\,\,\,1\leq s\leq r<\infty,
\]
defined by the composition rule $(\varphi^{-1})^{\ast}(g)=g\circ\varphi^{-1}$, is bounded, if and only if $\varphi$ possesses the Luzin $N$-property
and 
\begin{gather}
M_{r,s}(\Omega)=\biggl(\int\limits _{\Omega}\left|J(x,\varphi)\right|^{\frac{r}{r-s}}~dx\biggl)^{\frac{r-s}{rs}}<\infty, \,\,\,1\leq s<r<\infty,\nonumber\\
M_{s,s}(\Omega):=M_s(\Omega)=\operatorname{ess}\sup\limits_{x\in\Omega}\left|J(x,\varphi)\right|^{\frac{1}{s}}<\infty, \,\,\,1\leq s=r<\infty.
\nonumber
\end{gather}
 The norm of the composition operator $\|(\varphi^{-1})^{\ast}\|$ is equal to $M_{r,s}(\Omega)$.
\end{thm}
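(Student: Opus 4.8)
The plan is to reduce the entire equivalence to one change-of-variables identity combined with Hölder's inequality and a duality argument. The central tool is the area formula for weakly differentiable homeomorphisms: once $\varphi$ possesses the Luzin $N$-property, the substitution $y=\varphi(x)$, $dy=|J(x,\varphi)|\,dx$ gives
\[
\int\limits_{\widetilde{\Omega}}|g\circ\varphi^{-1}(y)|^{s}\,dy=\int\limits_{\Omega}|g(x)|^{s}\,|J(x,\varphi)|\,dx
\]
for every nonnegative measurable $g$ on $\Omega$. I would first record this identity, which converts the $L_s(\widetilde{\Omega})$-norm of $g\circ\varphi^{-1}$ into the $|J(\cdot,\varphi)|$-weighted $L_s(\Omega)$-norm of $g$, so that the problem becomes a weighted-norm inequality for the multiplication weight $|J(\cdot,\varphi)|$.

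For sufficiency, assume the Luzin $N$-property and $M_{r,s}(\Omega)<\infty$. When $s<r$ I apply Hölder's inequality on the right-hand side with exponents $r/s$ and $r/(r-s)$:
\[
\int\limits_{\Omega}|g|^{s}|J(x,\varphi)|\,dx\leq\left(\int\limits_{\Omega}|g|^{r}\,dx\right)^{\frac{s}{r}}\left(\int\limits_{\Omega}|J(x,\varphi)|^{\frac{r}{r-s}}\,dx\right)^{\frac{r-s}{r}}=\|g\mid L_{r}(\Omega)\|^{s}\,M_{r,s}(\Omega)^{s},
\]
which gives $\|g\circ\varphi^{-1}\mid L_{s}(\widetilde{\Omega})\|\leq M_{r,s}(\Omega)\,\|g\mid L_{r}(\Omega)\|$. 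When $s=r$ the same identity together with $\int_{\Omega}|g|^{s}|J(x,\varphi)|\,dx\leq(\ess\sup_{\Omega}|J(x,\varphi)|)\int_{\Omega}|g|^{s}\,dx$ yields the bound with $M_{s}(\Omega)$. In either case the norm of $(\varphi^{-1})^{\ast}$ is at most $M_{r,s}(\Omega)$.

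For necessity and the sharpness of the norm, suppose $(\varphi^{-1})^{\ast}$ is bounded with norm $C$. Testing on characteristic functions gives $|\varphi(A)|^{1/s}\leq C|A|^{1/r}$ for measurable $A\subset\Omega$; approximating a null set from outside by open sets then yields the Luzin $N$-property, which legitimizes the identity above. Rewriting the boundedness estimate through that identity and setting $h=|g|^{s}$ turns it into
\[
\int\limits_{\Omega}h(x)\,|J(x,\varphi)|\,dx\leq C^{s}\,\|h\mid L_{r/s}(\Omega)\|,\qquad h\geq0,
\]
so the linear functional $h\mapsto\int_{\Omega}h\,|J(\cdot,\varphi)|\,dx$ is bounded on $L_{r/s}(\Omega)$ with norm at most $C^{s}$. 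By the duality $(L_{r/s})^{\ast}=L_{r/(r-s)}$ this forces $M_{r,s}(\Omega)^{s}\leq C^{s}$, hence $M_{r,s}(\Omega)\leq C$; combined with the sufficiency bound this proves $\|(\varphi^{-1})^{\ast}\|=M_{r,s}(\Omega)$. The case $s=r$ is identical with $r/s=1$ and dual space $L_{\infty}$, returning $M_{s}(\Omega)\leq C$.

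I expect the main obstacle to be the rigorous justification of the change-of-variables identity for a merely weakly differentiable homeomorphism rather than a Lipschitz one: one must establish a.e. differentiability, verify that the image set function $A\mapsto|\varphi(A)|$ is absolutely continuous with density precisely $|J(\cdot,\varphi)|$, and see that these facts hinge exactly on the Luzin $N$-property. Once the area formula is in place the remaining Hölder and duality steps are routine.
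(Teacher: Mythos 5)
The paper offers no proof of this theorem at all --- it is stated as well known, with a citation to Vodop'yanov--Ukhlov \cite{VU1} --- so there is nothing internal to compare against; your argument is correct and is essentially the standard proof from that literature: the change-of-variables (area) formula plus H\"older's inequality for sufficiency, and, for necessity, testing on characteristic functions of open sets to extract the Luzin $N$-property, then the converse-H\"older/duality identification of $\bigl\||J(\cdot,\varphi)|\bigr\|_{L_{r/(r-s)}(\Omega)}=M^s_{r,s}(\Omega)$ with the $s$-th power of the operator norm. The one step you defer --- the identity $\int_{\widetilde{\Omega}}|g\circ\varphi^{-1}|^s\,dy=\int_{\Omega}|g|^s|J(x,\varphi)|\,dx$ for a $W^{1}_{1,\loc}$ homeomorphism with the $N$-property --- is indeed the technical heart, but it is a known result (a.e.\ approximate differentiability plus the area formula), and you identify precisely what must be verified and in which order (the $N$-property must be established before the identity may be used in the necessity direction), so this is a legitimate appeal to a known lemma rather than a gap.
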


\subsection{Composition Operators on Sobolev Spaces}

Let $\Omega$ and $\widetilde{\Omega}$ be domains in $\mathbb R^n$. We say that
a homeomorphism $\varphi:\Omega\to\widetilde{\Omega}$ induces a bounded composition
operator 
\[
\varphi^{\ast}:L^1_p(\widetilde{\Omega})\to L^1_p(\Omega),\,\,\,1\leq p\leq\infty,
\]
by the composition rule $\varphi^{\ast}(f)=f\circ\varphi$, if for
any function $f\in L^1_p(\widetilde{\Omega})$, the composition $\varphi^{\ast}(f)\in L^1_p(\Omega)$
is defined quasi-everywhere in $\Omega$ and there exists a constant $K_{p}(\varphi;\Omega)<\infty$ such that 
\[
\|\varphi^{\ast}(f)\mid L^1_p(\Omega)\|\leq K_{p}(\varphi;\Omega)\|f\mid L^1_p(\widetilde{\Omega})\|.
\]

Let $\varphi:\Omega\to\widetilde{\Omega}$ be weakly differentiable in $\Omega$. The mapping $\varphi$ is the mapping of finite distortion if $|D\varphi(x)|=0$ for almost all $x\in Z=\{x\in\Omega : J(x,\varphi)=0\}$.

\begin{thm}
\label{CompTh} \cite{GGR} A homeomorphism $\varphi:\Omega\to\widetilde{\Omega}$
between two domains $\Omega$ and $\widetilde{\Omega}$ induces a bounded composition
operator 
\[
\varphi^{\ast}:L^1_p(\widetilde{\Omega})\to L^1_p(\Omega),\,\,\,1\leq p<\infty,
\]
 if and only if $\varphi\in W_{1,\loc}^{1}(\Omega)$, has finite distortion
and 
\[
K_{p}(\varphi;\Omega)=\left(\ess\sup\limits_{x\in\Omega}\frac{|D\varphi(x)|^p}{|J(x,\varphi)|}\right)^{\frac{1}{p}}<\infty.
\]
\end{thm}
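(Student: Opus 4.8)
The plan is to prove the two implications separately; in both directions the bridge between the operator norm and the pointwise dilatation is the chain rule together with the area formula for Sobolev homeomorphisms.

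I would first treat sufficiency. Assume $\varphi\in W^1_{1,\loc}(\Omega)$ has finite distortion and $K_p(\varphi;\Omega)<\infty$. The first observation is a regularity upgrade: since $\varphi$ is a homeomorphism its Jacobian is locally integrable, with $\int_A|J(x,\varphi)|\,dx\le|\varphi(A)|$, so the pointwise bound $|D\varphi(x)|^p\le K_p(\varphi;\Omega)^p|J(x,\varphi)|$ forces $\varphi\in W^1_{p,\loc}(\Omega)$. This regularity legitimizes the chain rule $\nabla(f\circ\varphi)(x)=D\varphi(x)^{\top}(\nabla f)(\varphi(x))$ a.e. for smooth (or Lipschitz) $f$, whence $|\nabla(f\circ\varphi)(x)|\le|D\varphi(x)|\,|(\nabla f)(\varphi(x))|$. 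Raising to the $p$-th power, splitting $|D\varphi|^p=(|D\varphi|^p/|J|)\,|J|$ (the set $Z=\{J=0\}$ contributes nothing, since finite distortion makes $D\varphi=0$ there a.e.), and applying the area-formula inequality $\int_\Omega h(\varphi(x))|J(x,\varphi)|\,dx\le\int_{\widetilde\Omega}h(y)\,dy$ with $h=|\nabla f|^p$, yields
$$
\int_\Omega|\nabla(f\circ\varphi)|^p\,dx\le K_p(\varphi;\Omega)^p\int_{\widetilde\Omega}|\nabla f|^p\,dy.
$$
I stress that only the inequality direction of the change of variables is used, so the Luzin $N$-property is not needed here. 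It then remains to pass from smooth $f$ to arbitrary $f\in L^1_p(\widetilde\Omega)$: approximating $f$ by smooth functions and working with the $p$-quasicontinuous (refined) representatives, one checks that $f\circ\varphi$ is defined $p$-quasi-everywhere on $\Omega$ and obeys the same bound, giving boundedness of $\varphi^{\ast}$ with norm at most $K_p(\varphi;\Omega)$.

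For necessity, assume $\varphi^{\ast}:L^1_p(\widetilde\Omega)\to L^1_p(\Omega)$ is bounded with constant $K$. Testing the operator on functions that coincide locally with the coordinate functions $y\mapsto y_j$ (multiplied by suitable cut-offs) shows that each coordinate $\varphi_j$ lies in $L^1_{p,\loc}(\Omega)$; hence $\varphi\in W^1_{p,\loc}(\Omega)\subset W^1_{1,\loc}(\Omega)$, and the chain rule above is again available. The operator inequality then reads, after the area formula,
$$
\int_{\widetilde\Omega}\frac{\bigl|D\varphi(\varphi^{-1}(y))^{\top}\nabla f(y)\bigr|^p}{|J(\varphi^{-1}(y),\varphi)|}\,dy\le K^p\int_{\widetilde\Omega}|\nabla f(y)|^p\,dy
$$
for all smooth $f$. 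The core of the argument is to localize this integrated estimate into a pointwise one: fixing a Lebesgue point $y_0$ and a direction $\xi$, and choosing $f$ whose gradient is approximately $\xi$ on a shrinking ball about $y_0$ and negligible elsewhere, the Lebesgue differentiation theorem converts the display into $|D\varphi(x)^{\top}\xi|^p\le K^p|J(x,\varphi)|\,|\xi|^p$ for a.e. $x$; taking the supremum over $\xi$ gives $|D\varphi(x)|^p\le K^p|J(x,\varphi)|$ a.e., that is $K_p(\varphi;\Omega)\le K$. Finite distortion is then automatic: on $\{J=0\}$ the last inequality forces $D\varphi=0$ a.e.

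I expect the principal obstacle to be the necessity direction, and specifically the passage from the integrated estimate to the pointwise dilatation bound. Making the localization rigorous requires control of the admissible test functions $f$ near an arbitrary Lebesgue point of the density $y\mapsto|D\varphi(\varphi^{-1}(y))|^p/|J(\varphi^{-1}(y),\varphi)|$, together with a careful argument that the part of $\nabla f$ supported away from $y_0$ does not corrupt the estimate; this is where the capacitary (quasicontinuity) framework and the absolute continuity of $\varphi$ on almost all lines genuinely enter. A secondary technical point, on the sufficiency side, is the bookkeeping needed to guarantee that $f\circ\varphi$ is well defined $p$-quasi-everywhere, which rests on $\varphi$ transforming sets of $p$-capacity zero appropriately.
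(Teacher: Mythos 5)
First, a point of reference: the paper does not prove this statement at all --- it is quoted from \cite{GGR}, and the Appendix (Section 6) merely restates it as Theorem \ref{CompThP} alongside the capacitary machinery (Theorems \ref{Extr} and \ref{CompCap}) on which the original proof is built. So your proposal must be judged on its own merits. Your sufficiency half is the standard argument and is essentially sound: the pointwise bound plus local integrability of the Jacobian of a Sobolev homeomorphism upgrades $\varphi$ to $W^1_{p,\loc}$, the chain rule and the splitting $|D\varphi|^p=\bigl(|D\varphi|^p/|J|\bigr)|J|$ reduce the estimate to the area-formula inequality, and you are right that the Luzin $N$-property is not needed for that inequality direction. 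The deferred step (q.e.\ well-definedness of $f\circ\varphi$ for general $f$) is real but standard; it is exactly what the condenser-capacity estimate of Theorem \ref{CompCap} supplies, since capacity-null sets then pull back to negligible sets.

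The genuine gap is in the necessity half, and it is concentrated on the critical set $S=\{x\in\Omega : J(x,\varphi)=0\}$. Your key display,
\begin{equation}
\int\limits_{\widetilde\Omega}\frac{\bigl|D\varphi(\varphi^{-1}(y))^{\top}\nabla f(y)\bigr|^p}{|J(\varphi^{-1}(y),\varphi)|}~dy\leq K^p\int\limits_{\widetilde\Omega}|\nabla f(y)|^p~dy,
\nonumber
\end{equation}
can only be justified --- via Federer's decomposition of $\Omega$ into countably many pieces on which $\varphi$ is Lipschitz, and the area formula on each piece --- with the left-hand integral restricted to $\varphi\bigl(\Omega\setminus(Z\cup S)\bigr)$, where $Z$ is the null set where $\varphi$ fails to be approximately differentiable: on $\varphi(Z)$ the integrand is undefined (and $\varphi(Z)$ may have positive measure absent the Luzin $N$-property), while on $\varphi(S)$ you are dividing by zero. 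Consequently your Lebesgue-differentiation step yields $|D\varphi(x)^{\top}\xi|^p\leq CK^p|J(x,\varphi)||\xi|^p$ only for a.e.\ $x$ with $J(x,\varphi)\neq 0$, and the closing claim that ``finite distortion is then automatic: on $\{J=0\}$ the last inequality forces $D\varphi=0$'' is circular --- the inequality was never established on $\{J=0\}$, which is precisely where finite distortion lives; without it $\ess\sup|D\varphi|^p/|J|=\infty$ and the theorem's conclusion fails. The repair is measure-theoretic rather than differentiation-theoretic: your test functions give, for every Borel set $E\subset\widetilde\Omega$ (via a Besicovitch--Vitali covering argument), the bound $\nu_\xi(E):=\int_{\varphi^{-1}(E)}|D\varphi^{\top}\xi|^p~dx\leq CK^p|\xi|^p|E|$; since the area formula forces $|\varphi(S\setminus Z)|=0$, applying this with $E=\varphi(S\setminus Z)$ yields $\int_{S}|D\varphi^{\top}\xi|^p~dx=0$, i.e.\ $D\varphi=0$ a.e.\ on $S$. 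Note also that your cut-off localization loses a multiplicative constant, so it proves the equivalence but not the identification of the best operator constant $K_p(\varphi;\Omega)$ with the essential supremum asserted in the statement; the capacitary route of \cite{GGR} (through Theorems \ref{Extr} and \ref{CompCap}) avoids both difficulties, which is presumably why the original proof takes it.
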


\subsection{Sobolev-Poincar\'e inequalities}
\begin{defn} Let $1\leq r,p\leq \infty$. A bounded domain $\Omega\subset\mathbb R^n$ is called a $(r,p)$-Sobolev-Poincar\'e domain, if for any function $f\in L^1_p(\Omega)$, the $(r,p)$-Sobolev-Poincar\'e inequality
$$
\inf\limits_{c\in\mathbb R}\|f-c\mid L_r(\Omega)\|\leq B_{r,p}(D)\|\nabla f\mid L_p(\Omega)\|
$$
holds.
\end{defn}

We start from the case when weak $p$-quasiconformal mappings have a bounded Jacobian ($|J(x,\varphi)|\leq c<\infty$ for almost all $x$).  Examples of such homeomorphisms are bi-Lipschitz homeomorphisms and Lipschitz weak $p$-quasiconformal mappings.

\begin{thm}
\label{thm:PoincareEnCompPP} 
Let a bounded domain $\Omega\subset\mathbb R^n$ be a $(r,p)$-Sobolev-Poncar\'e domain, $1<p\leq r<\infty$, and there exists a weak $p$-quasiconformal mapping $\varphi: \Omega\to\widetilde{\Omega}$ of a domain $\Omega$ onto a bounded domain $\widetilde{\Omega}$ such that 
$$
M_r(\Omega)=\operatorname{ess}\sup\limits_{x\in \Omega}\left|J(x,\varphi)\right|^{\frac{1}{r}}<\infty.
$$
Then in the domain $\widetilde{\Omega}$ the $(r,p)$-Sobolev-Poincar\'e inequality 
\begin{equation}
\inf\limits_{c\in\mathbb R}\biggl(\int\limits _{\widetilde{\Omega}}|f(x)-c|^{r}~dx \biggr)^{\frac{1}{r}}\leq B_{r,p}(\widetilde{\Omega})\biggl(\int\limits _{\widetilde{\Omega}}
|\nabla f(x)|^{p}~dx\biggr)^{\frac{1}{p}},\,\, f\in W^1_p(\widetilde{\Omega}),
\label{eq:WPPI}
\end{equation} 
holds and 
$$
B_{r,p}(\widetilde{\Omega})\leq K_{p}(\Omega)M_r(\Omega) B_{r,p}({\Omega}).
$$
Here $B_{r,p}({\Omega})$ is the best constant in the $(r,p)$-Sobolev-Poincar\'e inequality in the domain $\Omega$.
\end{thm}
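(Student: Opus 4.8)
The plan is to realize the ``transfer'' commutative diagram from the introduction as a chain of concrete estimates: push a function $f$ defined on $\widetilde{\Omega}$ back to $\Omega$ through the Sobolev composition operator $\varphi^{\ast}$, apply the Sobolev-Poincar\'e inequality that already holds on $\Omega$, and then return to $\widetilde{\Omega}$ through the Lebesgue composition operator $(\varphi^{-1})^{\ast}$. Concretely, I would fix $f\in W^1_p(\widetilde{\Omega})$ and set $g:=\varphi^{\ast}(f)=f\circ\varphi$. By Theorem~\ref{CompTh}, weak $p$-quasiconformality of $\varphi$ guarantees $g\in L^1_p(\Omega)$ together with the bound
$$
\|\nabla g\mid L_p(\Omega)\|=\|\varphi^{\ast}(f)\mid L^1_p(\Omega)\|\leq K_p(\Omega)\,\|\nabla f\mid L_p(\widetilde{\Omega})\|.
$$
Since $\Omega$ is an $(r,p)$-Sobolev-Poincar\'e domain, applying its defining inequality to $g$ produces a constant $c_0$ with $g-c_0\in L_r(\Omega)$ and
$$
\inf_{c\in\mathbb R}\|g-c\mid L_r(\Omega)\|\leq B_{r,p}(\Omega)\,\|\nabla g\mid L_p(\Omega)\|.
$$

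Next I would return to $\widetilde{\Omega}$. The key algebraic observation is that $(\varphi^{-1})^{\ast}$ sends $g-c$ to $f-c$, since $(g-c)\circ\varphi^{-1}=f-c$ for every constant $c$; in particular constants are preserved, so the two infima over $c$ correspond to one another. To control the $L_r$-norms I would invoke Theorem~\ref{thm:LpLq} in the diagonal case $s=r$, where the operator norm of $(\varphi^{-1})^{\ast}\colon L_r(\Omega)\to L_r(\widetilde{\Omega})$ equals $M_r(\Omega)=\operatorname{ess}\sup\limits_{x\in\Omega}\left|J(x,\varphi)\right|^{\frac{1}{r}}$. This yields
$$
\inf_{c\in\mathbb R}\|f-c\mid L_r(\widetilde{\Omega})\|\leq M_r(\Omega)\inf_{c\in\mathbb R}\|g-c\mid L_r(\Omega)\|,
$$
and chaining the three displayed estimates gives exactly $B_{r,p}(\widetilde{\Omega})\leq K_p(\Omega)M_r(\Omega)B_{r,p}(\Omega)$.

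The step that needs genuine justification rather than bookkeeping is the applicability of Theorem~\ref{thm:LpLq}, whose hypotheses require that $\varphi$ enjoy the Luzin $N$-property. Here the bounded-Jacobian assumption does the work: combining $M_r(\Omega)<\infty$ with the distortion bound $|D\varphi(x)|^p\leq K_p(\Omega)^p\,|J(x,\varphi)|$ coming from the definition of weak $p$-quasiconformality gives $|D\varphi(x)|\leq K_p(\Omega)M_r(\Omega)^{r/p}$ for almost every $x$, so the weak gradient of $\varphi$ is essentially bounded. A $W^1_{1,\loc}$ homeomorphism with essentially bounded gradient is locally Lipschitz and hence possesses the Luzin $N$-property, while $M_{r,r}(\Omega)=M_r(\Omega)$ is precisely the essential supremum appearing in the hypothesis. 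I would therefore record this bounded-gradient, locally Lipschitz observation first, use it to license both Theorem~\ref{thm:LpLq} and the almost-everywhere identity $(g-c)\circ\varphi^{-1}=f-c$, and only afterwards assemble the chain of inequalities.

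A minor point to check along the way is finiteness: the Sobolev-Poincar\'e inequality on $\Omega$ already furnishes some $c_0$ with $g-c_0\in L_r(\Omega)$, which ensures that $(\varphi^{-1})^{\ast}(g-c_0)=f-c_0$ genuinely lies in $L_r(\widetilde{\Omega})$ and that no quantity in the argument is vacuously infinite. Once the Lipschitz/$N$-property observation is in place, the remainder is the routine composition of the two boundedness statements (Theorems~\ref{thm:LpLq} and \ref{CompTh}) around the Poincar\'e inequality on $\Omega$, which is the whole content of the ``transfer'' scheme.
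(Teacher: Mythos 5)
Your proposal is correct and follows essentially the same route as the paper's own proof: push $f$ to $g=f\circ\varphi$ via Theorem~\ref{CompTh}, apply the $(r,p)$-Sobolev-Poincar\'e inequality on $\Omega$, and return to $\widetilde{\Omega}$ using the Jacobian bound $M_r(\Omega)$ --- your appeal to Theorem~\ref{thm:LpLq} with $s=r$ is just the packaged form of the change-of-variables estimate that the paper writes out by hand. Your explicit verification of the Luzin $N$-property (bounded distortion together with bounded Jacobian forces $|D\varphi|\in L_{\infty}$, hence $\varphi$ is locally Lipschitz) supplies a detail the paper leaves implicit, which only makes the argument more complete.
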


\begin{proof}
Let $f\in L^1_p(\widetilde{\Omega})$. By the conditions of the theorem there exists a weak $p$-quasiconformal homeomorphism 
$\varphi: \Omega\to \widetilde{\Omega}$. Hence, the composition operator 
$$
\varphi^{\ast}: L^1_p(\widetilde{\Omega})\to L^1_p(\Omega)
$$
is bounded. Because $\Omega$ is a bounded $(r,p)$-Sobolev-Poincar\'e domain  $g=\varphi^{\ast}(f)\in W^1_p(\Omega)$.

Using the change of variable formula we obtain: 

\begin{multline}
\inf\limits_{c\in \mathbb R}\biggl(\int\limits_{\widetilde{\Omega}}|f(y)-c|^r~dy\biggr)^{\frac{1}{r}}=\inf\limits_{c\in \mathbb R}
\biggl(\int\limits_{\Omega}|f(\varphi(x))-c|^r|J(x,\varphi)|~dx\biggr)^{\frac{1}{r}}\\
\leq \ess\sup\limits_{x\in \Omega}|J(x,\varphi)|^{\frac{1}{r}}
\inf\limits_{c\in \mathbb R}\biggl(\int\limits_{\Omega}|f(\varphi(x))-c|^r~dx\biggr)^{\frac{1}{r}}
=M_r(\Omega)\inf\limits_{c\in \mathbb R}\biggl(\int\limits_{\Omega}|g(x)-c|^r~dx\biggr)^{\frac{1}{r}}.
\nonumber
\end{multline}

Because the domain $\Omega$ is a $(r,p)$-Sobolev-Poincar\'e domain we have
$$
\inf\limits_{c\in \mathbb R}\biggl(\int\limits_{\Omega}|g(x)-c|^r~dx\biggr)^{\frac{1}{r}}\leq B_{r,p}(\Omega)\biggl(\int\limits_{\Omega}|\nabla g(x)|^p~dx\biggr)^{\frac{1}{p}}.
$$
Hence
$$
\inf\limits_{c\in \mathbb R}\biggl(\int\limits_{\widetilde{\Omega}}|f(y)-c|^r~dy\biggr)^{\frac{1}{r}}\leq
M_r(\Omega)B_{r,p}(\Omega) \|g\mid L^1_p(\Omega)\|.
$$
By Theorem \ref{CompTh} 
$$
\|g\mid L^1_p(\Omega)\| \leq K_{p}^{\frac{1}{p}}(\Omega)\|f\mid L^1_p(\widetilde{\Omega})\|.
$$

Therefore
$$
\inf\limits_{c\in \mathbb R}\biggl(\int\limits_{\widetilde{\Omega}}|f(y)-c|^r~dy\biggr)^{\frac{1}{r}}\leq
K_p(\Omega)M_r(\Omega)B_{r,p}(\Omega)\biggl(\int\limits _{\widetilde{\Omega}}
|\nabla f(x)|^{p}~dx\biggr)^{\frac{1}{p}}.
$$
\end{proof}

\begin{cor} \label{Cor1} Under conditions of Theorem~\ref{thm:PoincareEnCompPP} the embedding operator
$$
i: W^1_p(\widetilde{\Omega}) \hookrightarrow L_p(\widetilde{\Omega})
$$
is compact.
\end{cor}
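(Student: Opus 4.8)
The plan is to establish compactness of the embedding operator
$$
i: W^1_p(\widetilde{\Omega}) \hookrightarrow L_p(\widetilde{\Omega})
$$
by factoring it through the composition operators and the analogous embedding for the regular domain $\Omega$, where compactness is already available. The key observation is that Theorem~\ref{thm:PoincareEnCompPP} gives us the \emph{improved} exponent $r>p$ on the left-hand side of the Sobolev-Poincar\'e inequality, and the gain from $p$ to $r$ together with boundedness of $\widetilde{\Omega}$ is exactly what upgrades a bounded embedding into a compact one.

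First I would recall that since $\Omega$ is a bounded Lipschitz domain, the classical Rellich-Kondrachov theorem guarantees that the embedding $W^1_p(\Omega)\hookrightarrow L_p(\Omega)$ is compact; this is the source of compactness that I intend to transfer. Next I would set up the commutative factorization indicated by the diagram in the introduction: for $f\in W^1_p(\widetilde{\Omega})$, the composition operator $\varphi^{\ast}$ sends $f$ to $g=f\circ\varphi\in W^1_p(\Omega)$ boundedly (by Theorem~\ref{CompTh}), and the inverse composition operator $(\varphi^{-1})^{\ast}$ sends a function on $\Omega$ back to a function on $\widetilde{\Omega}$, bounded from $L_r(\Omega)$ into $L_s(\widetilde{\Omega})$ by Theorem~\ref{thm:LpLq} under the Jacobian control $M_r(\Omega)<\infty$. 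The embedding $i$ on $\widetilde{\Omega}$ then factors as a composition of a bounded operator, a compact operator, and another bounded operator, and a composition in which any factor is compact is itself compact.

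The main obstacle, and the step requiring care, is making the factorization genuinely commute at the level of the \emph{right} function spaces and exponents, rather than just formally following the arrows. The cleanest route is to argue directly from the Sobolev-Poincar\'e inequality of Theorem~\ref{thm:PoincareEnCompPP}: take a bounded sequence $\{f_k\}$ in $W^1_p(\widetilde{\Omega})$, push it to $g_k=\varphi^{\ast}(f_k)$, which is bounded in $W^1_p(\Omega)$; by Rellich-Kondrachov extract a subsequence converging in $L_p(\Omega)$, and in fact (since $\Omega$ is a bounded Lipschitz, hence $(r,p)$-Sobolev-Poincar\'e, domain with $r>p$) arrange convergence in $L_r(\Omega)$ after subtracting suitable constants. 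Applying $(\varphi^{-1})^{\ast}$ and the change-of-variables estimate
$$
\|(\varphi^{-1})^{\ast}(g)\mid L_p(\widetilde{\Omega})\|\leq M_r(\Omega)\,\|g\mid L_r(\Omega)\|
$$
(valid since $\widetilde{\Omega}$ is bounded, so $p\leq r$ makes the relevant Jacobian integral finite) transfers this convergence back to convergence of $f_k$ in $L_p(\widetilde{\Omega})$.

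The delicate point is bookkeeping the additive constants: the Sobolev-Poincar\'e inequality controls $\inf_c\|f-c\|$, not $\|f\|$ itself, so I would first reduce to mean-zero functions (or use that a bounded sequence in $W^1_p$ has bounded averages) and then invoke that the Poincar\'e-type estimate of Theorem~\ref{thm:PoincareEnCompPP}, combined with boundedness of $\widetilde{\Omega}$, yields a full uniform $L_p(\widetilde{\Omega})$ bound whose total-boundedness follows from the gain in integrability exponent. In short, compactness is inherited from $\Omega$ through the bounded composition operators, with the exponent improvement $r>p$ supplying the compactifying slack, and the only real work is verifying that the constants and exponents line up so that the three-factor composition lands in $L_p(\widetilde{\Omega})$.
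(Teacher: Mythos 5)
Your proposal is correct in substance, but it takes a genuinely different route from the paper. The paper's own proof is a one-liner: Theorem~\ref{thm:PoincareEnCompPP} has already transplanted the $(r,p)$-Sobolev-Poincar\'e inequality with $r>p$ onto $\widetilde{\Omega}$ itself, and then the generalized Rellich-Kondrachov theorem for irregular bounded domains (as in \cite{M} or \cite{HK}) is applied \emph{intrinsically} to $\widetilde{\Omega}$: on a bounded domain, a Poincar\'e inequality with a gain in the integrability exponent forces compactness of $W^1_p\hookrightarrow L_p$, no mapping back to $\Omega$ required. You instead transfer compactness \emph{extrinsically}: pull a bounded sequence back by $\varphi^{\ast}$, use only the classical Rellich-Kondrachov theorem on the Lipschitz domain $\Omega$, and push convergence forward by $(\varphi^{-1})^{\ast}$ via Theorem~\ref{thm:LpLq}. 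Your route is more self-contained (it avoids the irregular-domain embedding theory entirely) and it actually shows that the exponent gain $r>p$ is not needed for this corollary: since $M_r(\Omega)<\infty$ means $|J(x,\varphi)|$ is essentially bounded, $M_p(\Omega)<\infty$ as well, so $(\varphi^{-1})^{\ast}:L_p(\Omega)\to L_p(\widetilde{\Omega})$ is already bounded and $L_p(\Omega)$-convergence of the pulled-back sequence suffices; your intermediate claim of convergence in $L_r(\Omega)$ is both unnecessary and, at the critical exponent $r=np/(n-p)$, false, so it should simply be dropped. The one point you must handle with care --- and you correctly flag it --- is that $\varphi^{\ast}$ controls only the seminorm, and the full $L_p(\Omega)$-norm of $g_k=f_k\circ\varphi$ is \emph{not} controlled (the Jacobian of $\varphi^{-1}$ need not be bounded above), so the pulled-back sequence is bounded in $W^1_p(\Omega)$ only after subtracting constants $c_k$; one then recovers the constants at the end, since $f_{k_j}-c_{k_j}$ converges in $L_p(\widetilde{\Omega})$ while $f_{k_j}$ is bounded there, forcing the real sequence $(c_{k_j})$ to be bounded and hence convergent along a further subsequence. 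With that bookkeeping made explicit, your argument is complete; the paper's citation-based proof buys brevity and modularity, yours buys elementarity and a slightly stronger conclusion.
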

It follows immediately by the generalized version of Rellich-Kondrachov compactness theorem (see, for example, \cite{M} or \cite{HK}) and the $(r,p)$--Sobolev-Poincar\'e inequality for $r>p$.

Hence, the standard corollary of Theorem~\ref{thm:PoincareEnCompPP} is a conclusion about a discrete spectral structure and a lower estimate of the first non-trivial eigenvalue $\mu_1(\widetilde{\Omega})$ of the spectral Neumann problem for the Laplace operator in $\widetilde{\Omega}$ (via the first non-trivial eigenvalue $\mu_1({\Omega})$):

\vskip 0.5cm

{\bf Theorem A.} {\it  
Suppose that there exists a weak $2$-quasiconformal mapping $\varphi: \Omega\to\widetilde{\Omega}$, of a bounded Lipschitz domain 
$\Omega\subset\mathbb R^n$ onto $\widetilde{\Omega}$, such that 
$$
M_2(\Omega)=\operatorname{ess}\sup\limits_{x\in \Omega}\left|J(x,\varphi)\right|^{\frac{1}{2}}<\infty.
$$
Then the spectrum of Neumann-Laplace operator in $\widetilde{\Omega}$ is discrete, can be written in the form
of a non-decreasing sequence
\[
0=\mu_0(\widetilde{\Omega})<\mu_{1}(\widetilde{\Omega})\leq\mu_{2}(\widetilde{\Omega})\leq...\leq\mu_{n}(\widetilde{\Omega})\leq...\,,
\]
and
\begin{equation}
\label{thm:estpp}
\frac{1}{\mu_1(\widetilde{\Omega})}\leq K^2_{2}(\Omega) M^2_2(\Omega)\frac{1}{\mu_1(\Omega)}.
\end{equation}
}
\begin{proof}
Because  $\Omega$ is a bounded Lipschitz domain the embedding operator
$$
i: W^1_2(\widetilde{\Omega}) \hookrightarrow L_2(\widetilde{\Omega})
$$
is compact, $\Omega$ is a $(r,2)$-Sobolev-Poincar\'e domain for $2 \leq r < \frac{2n}{n-2}$ and  the spectrum of the Neumann-Laplace operator is discrete. The condition of Theorem \ref{thm:PoincareEnCompPP} are satisfied for domains $\Omega$ and $\widetilde \Omega$. Therefore by Corollary \ref{Cor1} the embedding operator 
$$
i: W^1_2(\widetilde{\Omega}) \hookrightarrow L_2(\widetilde{\Omega})
$$
is compact. Hence the spectrum of Neumann-Laplace operator in $\widetilde{\Omega}$ is discrete and can be written in the form
of a non-decreasing sequence
\[
0=\mu_0(\widetilde{\Omega})<\mu_{1}(\widetilde{\Omega})\leq\mu_{2}(\widetilde{\Omega})\leq...\leq\mu_{n}(\widetilde{\Omega})\leq...\,.
\]
Because $\mu_1(\widetilde\Omega)^{-\frac{1}{2}}$ is the best constant $B_{2,2}(\widetilde \Omega)$ we have finally by Theorem~\ref{thm:PoincareEnCompPP}
\begin{equation}
\frac{1}{\mu_1(\widetilde{\Omega})}\leq K^2_{2}(\Omega) M^2_2(\Omega)\frac{1}{\mu_1(\Omega)}.
\end{equation} 

\end{proof}

\vskip 0.5cm

Boundedness of a Jacobian of a weak $p$-quasiconformal mapping is a sufficient but restrictive assumption. In this case $\varphi$ is a Lipschitz mapping and as result  an image of a Lipschitz domain can not be a domain with external singularities. We shall use weak $p$-quasiconformal mappings with an integrable Jacobian that represent a more flexible class of mappings, which allows us map Lipschitz domains onto cusp domains. 

 In the study of spectral stability of Dirichlet-Laplacian in planar simply connected domains we introduced a notion of conformal regular domains \cite{BGU1} for which Jacobian $J(x,\varphi)$ of a conformal mapping $\varphi:\mathbb D\to\widetilde{\Omega}$ of the unit disc $\mathbb D\subset\mathbb R^n$ onto $\Omega$ is integrable in  some degree $\alpha>1$. We used this class of domains for the spectral estimates of the first nontrivial Neumann eigenvalues for Laplacian \cite{GU15}.

In the space case we suppose (by an analogy with the plane case) that Jacobians $J(x,\varphi)$ of weak $p$-quasiconformal mapping $\varphi:\Omega\to\widetilde{\Omega}$ of a $(r,p)$-Sobolev-Poncar\'e domain $\Omega$ onto $\widetilde{\Omega}$  are integrable in some degree $\alpha>1$. Under this regularity condition on $\widetilde{\Omega}$ the following statement is correct:

\begin{thm}
\label{thm:PoincareEnCompPQ} 
Let a bounded domain $\Omega\subset\mathbb R^n$ be a $(r,p)$-Sobolev-Poncar\'e domain, $1<p\leq r<\infty$, and there exists a weak $p$-quasiconformal homeomorphism $\varphi: \Omega\to\widetilde{\Omega}$ of a domain $\Omega$ onto a bounded domain $\widetilde{\Omega}$ such that
$$ 
M_{r,s}(\Omega)=\biggl(\int\limits _{\Omega}\left|J(x,\varphi)\right|^{\frac{r}{r-s}}~dx\biggl)^{\frac{r-s}{rs}}<\infty
$$
for some $s<r$.
Then in the domain $\widetilde{\Omega}$ the $(s,p)$-Sobolev-Poincar\'e inequality
\begin{equation}
\biggl(\int\limits _{\widetilde{\Omega}}|f(x)-f_{\widetilde{\Omega}}|^{s}~dx \biggr)^{\frac{1}{s}}\leq B_{s,p}(\widetilde{\Omega})\biggl(\int\limits _{\widetilde{\Omega}}
|\nabla f(x)|^{p}~dx\biggr)^{\frac{1}{p}},\,\, f\in W^1_p(\widetilde{\Omega}),
\label{eq:WPI}
\end{equation} 
holds and 
$$
B_{s,p}(\widetilde{\Omega})\leq K_{p}(\Omega)M_{r,s}(\Omega)B_{r,p}(\Omega).
$$
Here $B_{r,p}(\Omega)$ is the best constant in the $(r,p)$-Sobolev-Poincar\'e inequality in the domain $\Omega$.

\end{thm}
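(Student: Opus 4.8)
The plan is to reproduce the proof of Theorem~\ref{thm:PoincareEnCompPP} up to the change of variables, and then to replace the step in which $M_r(\Omega)=\ess\sup_{x\in\Omega}|J(x,\varphi)|^{1/r}$ is pulled out of the integral by an application of H\"older's inequality; this is precisely the device that trades an $L_\infty$-bound on the Jacobian for an integrability bound. First I would fix $f\in L^1_p(\widetilde\Omega)$ and put $g=\varphi^{\ast}(f)=f\circ\varphi$. Since $\varphi$ is weak $p$-quasiconformal, Theorem~\ref{CompTh} gives $g\in L^1_p(\Omega)$ together with the norm estimate $\|g\mid L^1_p(\Omega)\|\leq K_p(\Omega)\|f\mid L^1_p(\widetilde\Omega)\|$, and, because $\Omega$ is a bounded $(r,p)$-Sobolev--Poincar\'e domain, this forces $g\in W^1_p(\Omega)$, exactly as in the previous theorem.

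The core computation is as follows. For a fixed $c\in\mathbb R$, the change of variables formula for the finite-distortion homeomorphism $\varphi$ gives
\[
\int\limits_{\widetilde\Omega}|f(y)-c|^{s}~dy=\int\limits_{\Omega}|g(x)-c|^{s}\,|J(x,\varphi)|~dx.
\]
I would then apply H\"older's inequality to the right-hand side with the conjugate exponents $r/s$ and $r/(r-s)$, separating the function from the Jacobian,
\[
\int\limits_{\Omega}|g-c|^{s}|J|~dx\leq\biggl(\int\limits_{\Omega}|g-c|^{r}~dx\biggr)^{\frac{s}{r}}\biggl(\int\limits_{\Omega}|J|^{\frac{r}{r-s}}~dx\biggr)^{\frac{r-s}{r}},
\]
so that after raising to the power $1/s$ the second factor is exactly $M_{r,s}(\Omega)$:
\[
\biggl(\int\limits_{\widetilde\Omega}|f-c|^{s}~dy\biggr)^{\frac1s}\leq M_{r,s}(\Omega)\biggl(\int\limits_{\Omega}|g-c|^{r}~dx\biggr)^{\frac1r}.
\]
The choice of exponents is forced: the function factor must carry the exponent $r$ so that the $(r,p)$-inequality available in $\Omega$ can be applied, while the Jacobian factor must reproduce the integral defining $M_{r,s}(\Omega)$.

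To conclude, I would take the infimum over $c\in\mathbb R$ on both sides (the same $c$ occurs on the left and on the right, so the inequality is preserved), apply the $(r,p)$-Sobolev--Poincar\'e inequality of $\Omega$ in the form $\inf_c(\int_\Omega|g-c|^r~dx)^{1/r}\leq B_{r,p}(\Omega)\|\nabla g\mid L_p(\Omega)\|$, and finally insert the composition estimate $\|\nabla g\mid L_p(\Omega)\|\leq K_p(\Omega)\|\nabla f\mid L_p(\widetilde\Omega)\|$ from Theorem~\ref{CompTh}. Chaining the three bounds produces
\[
\inf\limits_{c\in\mathbb R}\|f-c\mid L_s(\widetilde\Omega)\|\leq K_p(\Omega)M_{r,s}(\Omega)B_{r,p}(\Omega)\|\nabla f\mid L_p(\widetilde\Omega)\|,
\]
which is the claimed $(s,p)$-Sobolev--Poincar\'e inequality with the stated constant; the infimum form is exactly the one appearing in the definition of a Sobolev--Poincar\'e domain, and replacing the optimal constant by the mean value $f_{\widetilde\Omega}$ written in the statement changes the constant by at most a universal factor via the triangle and Jensen inequalities. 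The only step that is not entirely routine is the justification of the change of variables formula, since $\varphi$ is not a diffeomorphism but merely a Sobolev homeomorphism of finite distortion; this is exactly the Luzin $N$-property already underlying Theorems~\ref{thm:LpLq} and~\ref{CompTh}, so it requires no new argument, and I expect the whole proof to be a short variant of the preceding one.
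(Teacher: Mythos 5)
Your proof is correct and follows essentially the same route as the paper's own: boundedness of $\varphi^{\ast}$ via Theorem~\ref{CompTh}, the change of variables formula, H\"older's inequality with exponents $r/s$ and $r/(r-s)$ to extract $M_{r,s}(\Omega)$, then the $(r,p)$-Sobolev--Poincar\'e inequality in $\Omega$ and the composition estimate, chained in the same order. Your closing remark on passing from $\inf_{c\in\mathbb R}\|f-c\mid L_s(\widetilde{\Omega})\|$ to $\|f-f_{\widetilde{\Omega}}\mid L_s(\widetilde{\Omega})\|$ even handles a point the paper's proof leaves implicit, since the paper proves the infimum form while the statement is written with the mean value.
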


\begin{proof}
Let $f\in L^1_p(\widetilde{\Omega})$. By the conditions of the theorem there exists a $p$-quasiconformal homeomorphism $\varphi: \Omega\to \widetilde{\Omega}$. By Theorem \ref{CompTh} the composition operator 
$$
\varphi^{\ast}: L^1_p(\widetilde{\Omega})\to L^1_p(\Omega)
$$
is bounded. Because the bounded domain $\Omega$ is a $(r,p)$-Sobolev-Poncar\'e domain  $g=\varphi^{\ast}(f)\in W^1_p(\Omega)$.

Let $s\geq 1$. Using the change of variable formula and the H\"older inequality we obtain: 

\begin{multline}
\inf\limits_{c\in \mathbb R}\biggl(\int\limits_{\widetilde{\Omega}}|f(y)-c|^s~dy\biggr)^{\frac{1}{s}}=\inf\limits_{c\in \mathbb R}
\biggl(\int\limits_{\Omega}|f(\varphi(x))-c|^s|J(x,\varphi)|~dx\biggr)^{\frac{1}{s}}\\
\leq \biggl(\int\limits_{\Omega}|J(x,\varphi)|^{\frac{r}{r-s}}~dx\biggr)^{\frac{r-s}{rs}}
\inf\limits_{c\in \mathbb R}\biggl(\int\limits_{\Omega}|f(\varphi(x))-c|^r~dx\biggr)^{\frac{1}{r}}\\
=M_{r,s}(\Omega)\inf\limits_{c\in \mathbb R}\biggl(\int\limits_{\Omega}|g(x)-c|^r~dx\biggr)^{\frac{1}{r}}.
\nonumber
\end{multline}

 Because the domain $\Omega$ is a $(r,p)$-Sobolev-Poincar\'e domain the following inequality holds:
$$
\inf\limits_{c\in \mathbb R}\biggl(\int\limits_{\Omega}|g(x)-c|^r~dx\biggr)^{\frac{1}{r}}\leq B_{r,p}(\Omega)\biggl(\int\limits_{\Omega}|\nabla g(x)|^p~dx\biggr)^{\frac{1}{p}}.
$$
Combining two previous inequalities we have
$$
\inf\limits_{c\in \mathbb R}\biggl(\int\limits_{\widetilde{\Omega}}|f(y)-c|^s~dy\biggr)^{\frac{1}{s}}\leq
M_{r,s}(\Omega)B_{r,p}(\Omega) \|g\mid L^1_p(\Omega)\|.
$$
By Theorem \ref{CompTh} 
$$
\|g\mid L^1_p(\Omega)\| \leq K_{p}(D)\|f\mid L^1_p(\widetilde{\Omega})\|.
$$

Finally we obtain 
$$
\inf\limits_{c\in \mathbb R}\biggl(\int\limits_{\widetilde{\Omega}}|f(y)-c|^s~dy\biggr)^{\frac{1}{s}}\leq 
K_{p}(\Omega)M_{r,s}(\Omega) B_{r,p}(\Omega)\biggl(\int\limits_{\widetilde{\Omega}}|\nabla f|^p~dy\biggr)^{\frac{1}{p}}.
$$

It means that 
$$
B_{s,p}(\widetilde{\Omega})\leq K_{p}(\Omega)M_{r,s}(\Omega)B_{r,p}(\Omega).
$$

\end{proof}

\vskip 0.5cm

\begin{cor} \label{Cor2} Under conditions of Theorem~\ref{thm:PoincareEnCompPQ} the embedding operator
$$
i: W^1_p(\widetilde{\Omega}) \hookrightarrow L_p(\widetilde{\Omega})
$$
is compact.
\end{cor}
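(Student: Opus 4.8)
The plan is to obtain the compactness exactly as in Corollary~\ref{Cor1}, by feeding the $(s,p)$-Sobolev-Poincar\'e inequality~\eqref{eq:WPI} furnished by Theorem~\ref{thm:PoincareEnCompPQ} into the generalized Rellich-Kondrachov compactness theorem (see \cite{M} or \cite{HK}). That theorem asserts that on a bounded domain supporting an $(s,p)$-Sobolev-Poincar\'e inequality with $s>p$, the embedding $W^1_p\hookrightarrow L_q$ is compact for every $q<s$; taking $q=p$ yields precisely the compactness of $i\colon W^1_p(\widetilde{\Omega})\hookrightarrow L_p(\widetilde{\Omega})$.

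The only delicate point is the choice of the exponent $s$, since Theorem~\ref{thm:PoincareEnCompPQ} supplies \eqref{eq:WPI} for the particular $s<r$ at which $M_{r,s}(\Omega)<\infty$, whereas the compactness step requires $s>p$. First I would observe that, because $\widetilde{\Omega}$ is bounded and hence $\Omega$ has finite measure, the exponent $r/(r-s)$ in the integrand $|J(x,\varphi)|^{r/(r-s)}$ decreases with $s$, so finiteness of $M_{r,s}(\Omega)$ for one admissible value propagates to every smaller $s$. Thus the admissible exponents form an interval inside $[1,r)$, and I would simply select $s$ within $(p,r)$. With such an $s$ fixed, \eqref{eq:WPI} holds with $s>p$, and the generalized Rellich-Kondrachov theorem closes the argument verbatim as for Corollary~\ref{Cor1}.

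The main obstacle is thus not analytic but one of hypothesis-bookkeeping: one must guarantee that the set of admissible $s$ actually reaches above $p$, i.e. that $J(x,\varphi)$ is integrable to a power strictly exceeding the borderline value $r/(r-p)$. This is exactly the standing regularity assumption motivating the theorem --- that the Jacobian lies in $L_\alpha(\Omega)$ for a sufficiently large $\alpha>1$ when $\varphi$ maps the regular Lipschitz domain $\Omega$ onto the cusp-type target $\widetilde{\Omega}$ --- so once $p<r$ and this integrability hold, the required exponent $s\in(p,r)$ exists and the compactness follows.
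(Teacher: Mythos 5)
Your argument is exactly the paper's: its proof of Corollary~\ref{Cor2} is the same one-sentence appeal to the generalized Rellich--Kondrachov theorem (\cite{M}, \cite{HK}) combined with the Sobolev--Poincar\'e inequality with exponent gain furnished by Theorem~\ref{thm:PoincareEnCompPQ}, the gain $s>p$ being left implicit in the paper's phrase ``for $r>p$''. Two corrections to your side remarks: the exponent $r/(r-s)$ \emph{increases} with $s$ (it does not decrease) --- this, together with H\"older's inequality on $\Omega$ (which has finite measure because it is bounded by hypothesis, not by inference from $\widetilde{\Omega}$), is precisely why finiteness of $M_{r,s}(\Omega)$ propagates to smaller $s$; and since such downward propagation can never push the admissible range of $s$ above $p$, the existence of an admissible $s>p$ is, as you correctly flag, a genuine additional integrability requirement on $J(x,\varphi)$ --- one the paper also imposes only tacitly (indeed Theorem~C invokes this corollary in the borderline case $s=p=2$, where the gain must come from the hypothesis $M_{r,2}(\Omega)<\infty$ with $r>2$ rather than from the $(s,p)$-inequality alone).
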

It follows immediately by the generalized version of Rellich-Kondrachov compactness theorem (see, for example, \cite{M} or \cite{HK}) and the $(r,p)$--Sobolev-Poincar\'e inequality for $r>p$. 

\vskip 0.5cm

We are ready to establish the main lower estimate:

{\bf Theorem C.}
{\it 
Let a domain $\Omega\subset\mathbb R^n$ be a $(r,2)$-Sobolev-Poncar\'e domain, $r>2$, and there exists a weak $2$-quasiconformal homeomorphism $\varphi: \Omega\to\widetilde{\Omega}$ of a domain $\Omega$ onto a bounded domain $\widetilde{\Omega}$ such that
$$ 
M_{r,2}(\Omega)=\biggl(\int\limits _{\Omega}\left|J(x,\varphi)\right|^{\frac{r}{r-2}}~dx\biggl)^{\frac{r-2}{2r}}<\infty
$$
for some $r>2$.
Then the spectrum of Neumann-Laplace operator in $\widetilde{\Omega}$ is discrete, can be written in the form
of a non-decreasing sequence
\[
0=\mu_0(\widetilde{\Omega})<\mu_{1}(\widetilde{\Omega})\leq\mu_{2}(\widetilde{\Omega})\leq...\leq\mu_{n}(\widetilde{\Omega})\leq...\,,
\]
and
$$
\frac{1}{\sqrt{\mu_1(\widetilde{\Omega})}}\leq K_{2}(\Omega)M_{r,2}(\Omega)B_{r,2}(\Omega),
$$
where $B_{r,2}(\Omega)$ is the best constant in the $(r,p)$-Sobolev-Poincar\'e inequality for the domain $\Omega$.
}
\vskip 0.5cm

The proof is the same as for Theorem A. We only need to refer Corollary \ref{Cor2} instead of Corollary
\ref{Cor1} and Theorem \ref{thm:PoincareEnCompPQ} instead of Theorem \ref{thm:PoincareEnCompPP}.

\subsection{Simple Examples}

We give two simple examples of weak $2$-quasiconformal mappings and its applications to lower estimates of the first non-trivial Neumann eigenvalue of the Laplace operator based on Theorem A.

Consider the linear invertible map  $\varphi_l:\mathbb{R}^2\to \mathbb{R}^2$ 
$$
\varphi_l(x,y)=\left(\begin{array}{cc}
a & 0  \\
0 & b 
\end{array} \right)
\left(\begin{array}{cc}
x \\
y 
\end{array} \right)
$$
Then
$$
K^2_2(\varphi_l)=\frac{\max\{a^2,b^2\}}{ab}\,\,\text{and}\,\, J(x,\varphi_l)=ab.
$$

The first example demonstrate that Theorem A is exact for rectangles. 
\begin{exa}
Let $\Omega=\mathbb Q=(0,1)\times(0,1)$ be the unit square, then $\varphi_l(\mathbb Q)=\mathbb P=(0,a)\times(0,b)$.
Then by Theorem~A
$$
\mu_1(\mathbb P)\geq \frac{1}{\frac{\max\{a^2,b^2\}}{ab}\cdot ab}\mu_1(\mathbb Q)=\frac{\pi^2}{\max\{a^2,b^2\}}.
$$
\end{exa}

This estimate coincides with the known exact value of $\mu_1$.

The second example is an ellipse. Our estimate is better than classical one as we explained in Introduction.

\begin{exa}
Let $\Omega=\mathbb D$ be the unit disc, then 
$$
\varphi_l(\mathbb D)=E=\left\{(x,y)\in R^2 : \frac{x^2}{a^2}+\frac{y^2}{b^2}\leq 1\right\}.
$$
Then by Theorem~A
$$
\mu_1(E)\geq \frac{1}{\frac{\max\{a^2,b^2\}}{ab}\cdot ab}\mu_1(\mathbb D)=\frac{(j'_{1,1})^2}{\max\{a^2,b^2\}}.
$$
\end{exa}

IN next section Theorem C will be applied to domains with anisotropic H\"older singularities.

\section{Spectral estimates in domains with H\"older singularities}

Using the classical technique \cite{GT}  we  obtained in \cite{GU15} the following estimate of the Poincar\'e constants for bounded convex domains :

\begin{lem}(\cite{GU15}, Proposition 4.7)
\label{EstCon}
Let $\Omega\subset\mathbb R^n$ be a bounded convex domain. Then
\begin{equation}
B_{q,p}(\Omega)\leq
\frac{diam(\Omega)^n}{n|\Omega|}\left(\frac{1-\delta}{{1}/{n}-\delta}\right)^{1-\delta}\omega_n^{1-\frac{1}{n}}|\Omega|^{\frac{1}{n}-\delta},\,\,\delta=\frac{1}{p}+\frac{1}{q}\geq 0.
\nonumber
\end{equation}
\end{lem}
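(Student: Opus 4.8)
The plan is to reduce the $(q,p)$-Sobolev-Poincar\'e inequality on the convex domain $\Omega$ to two classical facts about potentials, exactly as in Chapter~7 of \cite{GT}. First, since the infimum over constants $c$ is bounded by the particular choice $c=u_\Omega:=|\Omega|^{-1}\int_\Omega u\,dx$, it suffices to estimate $\|u-u_\Omega\mid L_q(\Omega)\|$ for $u\in C^1(\Omega)\cap W^1_p(\Omega)$, the general case following by density. The essential use of convexity is the pointwise integral representation
$$
|u(x)-u_\Omega|\leq \frac{\diam(\Omega)^n}{n|\Omega|}\int\limits_\Omega \frac{|\nabla u(y)|}{|x-y|^{n-1}}\,dy,
$$
which is obtained by writing $u(x)-u(y)$ as the integral of $\nabla u$ along the segment joining $x$ and $y$ -- a segment that remains inside $\Omega$ precisely because $\Omega$ is convex -- then averaging in $y$ over $\Omega$ and bounding the resulting geometric factor by $\diam(\Omega)^n/(n|\Omega|)$.

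Second, the right-hand side is the Riesz (fractional integration) operator
$$
(V_{1/n}g)(x)=\int\limits_\Omega |x-y|^{n\left(\frac1n-1\right)}g(y)\,dy
$$
applied to $g=|\nabla u|$. The quantitative heart of the argument is the sharp mapping estimate for $V_\mu$ on a bounded domain: for $0\leq \delta:=\frac1p-\frac1q<\mu$ one has
$$
\|V_\mu g\mid L_q(\Omega)\|\leq \left(\frac{1-\delta}{\mu-\delta}\right)^{1-\delta}\omega_n^{1-\mu}|\Omega|^{\mu-\delta}\,\|g\mid L_p(\Omega)\|.
$$
This is proved by splitting the kernel at a radius chosen to balance the two resulting pieces, estimating the near part by H\"older's inequality (the condition $\delta<\mu$ ensuring that the relevant power of the kernel is integrable over $\Omega$) and the far part directly, and then optimizing the splitting radius; the geometric constant $\omega_n$, the volume of the unit ball, enters through the comparison of $\Omega$ with a ball of equal measure.

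Taking $\mu=\frac1n$, so that $n(\mu-1)=1-n$ reproduces the kernel $|x-y|^{-(n-1)}$, and combining the two displayed bounds with $g=|\nabla u|$ yields
$$
\|u-u_\Omega\mid L_q(\Omega)\|\leq \frac{\diam(\Omega)^n}{n|\Omega|}\left(\frac{1-\delta}{\frac1n-\delta}\right)^{1-\delta}\omega_n^{1-\frac1n}|\Omega|^{\frac1n-\delta}\,\|\nabla u\mid L_p(\Omega)\|,
$$
which is exactly the asserted bound on $B_{q,p}(\Omega)$, with $\delta=\frac1p-\frac1q$ the integrability deficit. I expect the main obstacle to lie in the two technical lemmas rather than in their assembly: establishing the representation formula with the stated geometric constant, where convexity is indispensable and cannot be relaxed (for a non-convex domain the segment from $x$ to $y$ need not stay inside $\Omega$, and the constant degenerates), and tracking the sharp constant through the optimization in the fractional-integral estimate. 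Once both are in hand, the conclusion is a direct composition of the two inequalities.
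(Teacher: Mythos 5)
Your proof is correct and follows exactly the route the paper intends: the paper does not reprove this lemma but quotes it from \cite{GU15} (Proposition 4.7), noting it was obtained by ``the classical technique \cite{GT}'', which is precisely your two-step argument (the convexity-based representation formula, GT Lemma 7.16, composed with the sharp Riesz potential bound $V_{1/n}:L_p\to L_q$, GT Lemma 7.12). One remark: your $\delta=\frac{1}{p}-\frac{1}{q}$ is the correct definition -- the statement's ``$\delta=\frac{1}{p}+\frac{1}{q}$'' is a typo, since only with the minus sign is the constraint $0\leq\delta<\frac{1}{n}$ satisfiable and the constant $\left(\frac{1-\delta}{1/n-\delta}\right)^{1-\delta}$ meaningful.
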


Let us remarks that images of  convex Lipschitz domains under weak $p$-quasiconformal mappings are not necessary convex and  Lipschitz. 

One of possible examples is a class of domains $H_g$ with anisotropic H\"older singularities that will be defined below. 

As a basic convex Lipschitz domain we choose the standard $n$-dimensional simplex $H_1$. By elementary calculations $\diam H_1=1$ and 
$|H_1|={1}/{n}$. Lemma \ref{EstCon} leads to the following estimate of the Sobolev-Poincar\'e constant for $H_1$:

\begin{equation}
\label{H1}
B_{q,p}(H_1)\leq
n\left(\frac{1-\delta}{{1}/{n}-\delta}\right)^{1-\delta}\omega_n^{1-\frac{1}{n}}\left(\frac{1}{(n+1)!}\right)^{\frac{1}{n}-\delta},\,\,\delta=\frac{1}{p}+\frac{1}{q}\geq 0
\end{equation}

 Define domains $H_g$  with anisotropic H\"older singularities (introduced in \cite{GGu}):
$$
H_g=\{ x\in\mathbb R^n : 0<x_n<1, 0<x_i<g_i(x_n),
\,i=1,2,\dots,n-1\}.
$$
Here $g_i(\tau)=\tau^{\gamma_i}$, $\gamma_i\geq 1$, $0\leq\tau\leq 1$ are H\"older functions and for the function $G=\prod_{i=1}^{n-1}g_i$ denote by
$$
\gamma=\frac{\log G(\tau)}{\log \tau}+1.
$$
It is evident that $\gamma\geq n$. In the case $g_1=g_2=\dots=g_{n-1}$ we will say that domain $H_g$ is a domain with $\sigma$-H\"older singularity, $\sigma=(\gamma-1)/(n-1)$.
For $g_1(\tau)=g_2(\tau)=\dots=g_{n-1}(\tau)=\tau$ we will use notation $H_1$ instead
of $H_g$.

The mapping $\varphi_a: H_1\to H_g$, $a>0$,
$$
\varphi_a(x)=\left(\frac{x_1}{x_n}g^a_1(x_n),\dots,\frac{x_{n-1}}{x_n}g^a_{n-1}(x_n),x_n^a\right).
$$
is a map from the Lipschitz convex domain $H_1$ onto the "cusp"' domain $H_g$.
 
By simple calculations 
$$
\frac{\partial(\varphi_a)_i}{\partial
x_i}=\frac{g^a_i(x_n)}{x_n},\quad
\frac{\partial(\varphi_a)_i}{\partial
x_n}=\frac{-x_ig^a_i(x_n)}{x_n^{2}}+\frac{ax_ig^{a-1}_i(x_n)}{x_n}g'_i(x_n)
\quad\text{and}\quad\frac{\partial(\varphi_a)_n}{\partial
x_n}=ax_n^{a-1}
$$
for any $i=1,...,n-1$. Hence $J(x,\varphi_a)=ax_n^{a-n}G^a(x_n)=ax_n^{a\gamma-n}$, $J(x,\varphi_a)\leq a$ for $a>1$ and
\begin{multline}\label{maps}
D\varphi_a (x)=
\left(\begin{array}{cccc}
x_n^{a\gamma_1-1} & 0 & ... & (a\gamma_1-1)x_1x_n^{a\gamma_1-2}\\
0 & x_n^{a\gamma_2-1} & ... & (a\gamma_2-1)x_2x_n^{a\gamma_2-2}\\
... & ... & ... & ...\\
0 & 0 & ... & ax_n^{a-1}
\end{array} \right)\\
=
x_n^{a-1}\left(\begin{array}{cccc}
x_n^{a\gamma_1-a} & 0 & ... & (a\gamma_1-1)\frac{x_1}{x_n}x_n^{a(\gamma_1-1)}\\
0 & x_n^{a\gamma_2-a} & ... & (a\gamma_2-1)\frac{x_2}{x_n}x_n^{a(\gamma_2-1)}\\
... & ... & ... & ...\\
0 & 0 & ... & a
\end{array} \right).
\end{multline}

Because $0<x_n<1$ and $x_1/x_n<1$ we have the following estimate
\begin{multline}
|D\varphi_a(x)|\leq x_n^{a-1}\sqrt{\sum_{i=1}^{n-1}(a\gamma_i-1)^2+n-1+a^2}\\
= x_n^{a-1}\sqrt{a^2(\gamma_1^2+...+\gamma_{n-1}^2+1)-2a\sum_{i=1}^{n-1}\gamma_i}:=A_a(\gamma) x_n^{a-1}.
\nonumber
\end{multline}

We used a short notation $A_a(\gamma)$ for the square root in the right hand side.
Then 
$$
\frac{|D\varphi_a(x)|^p}{J(x,\varphi_a)}\leq A^p_a(\gamma) x_n^{p(a-1)-(a\gamma-n)}\leq K_p^{p}<\infty
$$
if $p(a-1)-(a\gamma-n)\geq 0$. 

Therefore, in the case $a>1$ the mapping $\varphi_a: H_1\to H_g$ is a weak $p$-quasiconformal mapping for any $p\geq (a\gamma -n)/(a-1)$ and 
$$
K_p(H_1)\leq A_p(\gamma)=\sqrt{a^2(\gamma_1^2+...+\gamma_{n-1}^2+1)-2a\sum_{i=1}^{n-1}\gamma_i}.
$$

We proved the following

\begin{lem}
\label{lemhol}
Let $1<p<n$. The homeomorphism $\varphi_a: H_1\to H_g$ is a weak $p$-quasiconformal homeomorphism if $0<a\leq (n-p)/(\gamma-p)$. 

\end{lem}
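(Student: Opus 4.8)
The plan is to verify, one by one, the three defining conditions of a weak $p$-quasiconformal homeomorphism supplied by Theorem \ref{CompTh}: that $\varphi_a\in W^1_{1,\loc}(H_1)$, that $\varphi_a$ has finite distortion, and that the dilatation coefficient $K_p(H_1)$ is finite. Since $\varphi_a$ is already taken to be a homeomorphism of $H_1$ onto $H_g$, only these three analytic conditions remain, and two of them are immediate from the explicit formula, so that the essential content of the lemma is a single sign check.

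First I would note that on $H_1$ one has $x_n>0$ throughout, so every coordinate of $\varphi_a$ — assembled from the smooth functions $x_n^{a\gamma_i}$, $x_i/x_n$ and $x_n^a$ — is $C^\infty$ on $H_1$; in particular $\varphi_a\in W^1_{1,\loc}(H_1)$ and its weak and classical derivatives coincide, so the matrix $D\varphi_a$ displayed in \eqref{maps} is genuinely the weak differential. Next, from the computed Jacobian $J(x,\varphi_a)=ax_n^{a\gamma-n}$ together with $a>0$ and $x_n>0$, the Jacobian is strictly positive everywhere on $H_1$; hence the exceptional set $Z=\{x\in H_1: J(x,\varphi_a)=0\}$ is empty and the finite-distortion requirement $|D\varphi_a|=0$ a.e.\ on $Z$ holds vacuously.

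The only substantive step is the dilatation bound, and here I would invoke the pointwise estimate already established just above the statement,
$$
\frac{|D\varphi_a(x)|^p}{J(x,\varphi_a)}\leq A_a^p(\gamma)\,x_n^{\,p(a-1)-(a\gamma-n)},
$$
and use that $0<x_n<1$ forces $x_n^{t}\leq 1$ precisely when $t\geq 0$. It therefore suffices to show that the hypothesis $a\leq (n-p)/(\gamma-p)$ is equivalent to nonnegativity of the exponent, $p(a-1)-(a\gamma-n)\geq 0$. This is a one-line rearrangement: the latter reads $a(p-\gamma)\geq p-n$, and since $\gamma\geq n>p$ gives $p-\gamma<0$, dividing by $p-\gamma$ and reversing the inequality yields exactly $a\leq (p-n)/(p-\gamma)=(n-p)/(\gamma-p)$. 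With the exponent nonnegative we obtain $K_p(H_1)\leq A_a(\gamma)<\infty$, and Theorem \ref{CompTh} then delivers the conclusion.

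The step most deserving of care — what I would flag as the main obstacle — is the sign analysis of $p-\gamma$, on which the equivalence of the two forms of the condition depends: dividing by $p-\gamma$ flips the inequality only because $\gamma>p$, a fact guaranteed by $\gamma\geq n$ combined with the standing assumption $p<n$. Everything else is routine verification against the explicit formula for $\varphi_a$ and a direct appeal to the composition-operator characterization.
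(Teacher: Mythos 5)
Your proof is correct and takes essentially the same route as the paper: the paper's own proof of Lemma \ref{lemhol} is exactly the computation preceding its statement (the bound $|D\varphi_a(x)|\le A_a(\gamma)x_n^{a-1}$, the Jacobian $J(x,\varphi_a)=ax_n^{a\gamma-n}$, and the requirement that the exponent $p(a-1)-(a\gamma-n)$ be nonnegative), which you invoke and then rearrange into $a\le (n-p)/(\gamma-p)$ via the sign of $\gamma-p$, using $\gamma\ge n>p$, just as the paper intends. Your explicit checks of the $W^1_{1,\loc}$ and finite-distortion conditions (the Jacobian never vanishes on $H_1$) merely spell out what the paper leaves implicit.
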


We are ready to prove as a consequence of Theorem C:

\vskip 0.3cm

{\bf Theorem B.} 
\label{thm:Holder}
{\it  
Let 
$$
H_g:=\{ x\in\mathbb R^n : n \geq 3, 0<x_n<1, 0<x_i<x_n^{\gamma_i},
\,i=1,2,\dots,n-1\}
$$  
$\gamma_i \geq 1$, $\gamma:=1+\sum_{i=1}^{n-1}\gamma_i$, $g:=(\gamma_1,...,\gamma_{n-1})$ .

Then the spectrum of the Neumann-Laplace operator in the domain $H_g$ is discrete, can be written in the form
of a non-decreasing sequence
\[
0=\mu_0(H_g)<\mu_{1}(H_g)\leq\mu_{2}(H_g)\leq...\leq\mu_{n}(H_g)\leq...\,,
\]
and for any $r>2$ the following inequality holds:
\begin{multline}
\frac{1}{\mu_1(H_g)} \leq K^2_{2}(H_1)M^2_{r,2}(H_1)B^2_{r,2}(H_1)
\\ \leq \inf\limits_{a}
\left(a^2(\gamma_1^2+...+\gamma_{n-1}^2+1)-2a\sum_{i=1}^{n-1}\gamma_i\right)
a\biggl(\int\limits _{H_1}\left(x_n^{a\gamma-n}\right)^{\frac{r}{r-2}}~dx\biggl)^{\frac{r-2}{r}}
B^2_{r,2}(H_1),
\nonumber
\end{multline}
where $(2n)/(\gamma r)<a\leq (n-2)/(\gamma-2)$ and $B_{r,2}(H_1)$ is the best constant in the $(r,2)$-Sobolev-Poincar\'e inequality in the domain $H_1$.
}
\vskip 0.5cm

\begin{rem} By the estimate \ref{H1} 
$$
B_{r,2}(H_1)\leq
n\left(\frac{1-\delta}{{1}/{n}-\delta}\right)^{1-\delta}\omega_n^{1-\frac{1}{n}}\left(\frac{1}{(n+1)!}\right)^{\frac{1}{n}-\delta},\,\,\delta=\frac{1}{r}+\frac{1}{2}\geq 0.
$$
\nonumber
\end{rem}
\begin{proof}
The homeomorphism $\varphi_a: H_1\to H_g$, $0<a\leq (n-2)/(\gamma-2)$,
$$
\varphi_a(x)=\left(\frac{x_1}{x_n}g^a_1(x_n),\dots,\frac{x_{n-1}}{x_n}g^a_{n-1}(x_n),x_n^a\right).
$$
maps the convex Lipschitz domain $H_1$ onto the cusp domain $H_g$ and by Lemma~\ref{lemhol} it is a weak $2$-quasiconformal homeomorphism.

Let us check conditions of Theorem C. Because $\varphi$ is a weak $2$-quasiconformal mapping its $p$-dilatation $K_2$ is bounded. The basic domain $H_1$  is an $(r,2)$-Sobolev-Poincar\'e domain, i.e. $B_{r,2}(H_1)<\infty$. We only need to estimate the constant $M_{r,2}(H_1)$. 

Here a corresponding calculations:

\begin{multline*} 
M_{r,2}(H_1)=\left(\int\limits _{H_1}\left|J(x,\varphi_a)\right|^{\frac{r}{r-2}}~dx\right)^{\frac{r-2}{2r}}\leq 
a^{\frac{1}{2}}\left(\int\limits _{H_1} \left(x_n^{a\gamma-n}\right)^{\frac{r}{r-2}}~dx\right)^{\frac{r-2}{2r}}\\
=
a^{\frac{1}{2}}\left(\int\limits _{0}^1 \left(x_n^{a\gamma-n}\right)^{\frac{r}{r-2}}\left(\int\limits_0^{x_n}~dx_1~\dots ~ 
\int\limits_0^{x_n}~dx_{n-1}\right)~dx_n\right)^{\frac{r-2}{2r}}\\
=
a^{\frac{1}{2}}\left(\int\limits _{0}^1 \left(x_n^{a\gamma-n}\right)^{\frac{r}{r-2}}\cdot x_n^{n-1}~dx_n\right)^{\frac{r-2}{2r}}.
\end{multline*}

It means that $M_{r,2}(H_1)$ is finite if 

$$
\frac{(a\gamma-n)r}{r-2}+n-1>-1,\,\,\,\text{i.~e.}\,\,\,a>\frac{2n}{\gamma r}.
$$

The conditions of  Theorem~C is fulfilled. Therefore, the spectrum of Neumann-Laplace operator in $H_g$ is discrete, can be written in the form
of a non-decreasing sequence
\[
0=\mu_0(H_g)<\mu_{1}(H_g)\leq\mu_{2}(H_g)\leq...\leq\mu_{n}(H_g)\leq...\,,
\]
and
\begin{multline}
\frac{1}{\mu_1(H_g)}\leq K^2_{2}(H_1)M^2_{r,2}(H_1)B^2_{r,2}(H_1)
\\
\leq 
\left(a^2(\gamma_1^2+...+\gamma_{n-1}^2+1)-2a\sum_{i=1}^{n-1}\gamma_i\right)
a\biggl(\int\limits _{H_1}\left(x_n^{a\gamma-n}\right)^{\frac{r}{r-2}}~dx\biggl)^{\frac{r-2}{r}}
B^2_{r,2}(H_1),
\nonumber
\end{multline}
where $(2n)/(\gamma r)<a\leq (n-2)/(\gamma-2)$.
\end{proof}

\section{Lower estimates for the first non-trivial eigenvalue of the $p$-Laplace operator}

In this section we consider the nonlinear Neumann eigenvalue problem
for the $p$-Laplace operator ($p>1$):
\begin{equation}
\label{pLaplace}
\begin{cases}
-\operatorname{div}\left(|\nabla u|^{p-2}\nabla u\right)=\mu_p|u|^{p-2}u& \text{in}\,\,\,\,\,\Omega\\
\frac{\partial u}{\partial n}=0& \text{on}\,\,\,\partial\Omega.
\end{cases}
\end{equation}

This formulation of the eigenvalue problem is correct for Lipschitz domains.

Because we are working with more general class of domains we shall use a corresponding weak formulation that coincides with the classical one for Lipschitz domains.

The weak statement of this spectral problem is as follows: a function
$u$ solves the previous problem iff $u\in W^{1,p}(\Omega)$ and 
\[
\int\limits _{\Omega} \left(|\nabla u(x)|^{p-2}\nabla u(x)\right)\cdot\nabla v(x)~dx=\mu_p \int\limits _{\Omega}|u|^{p-2}u(x)v(x)~dx
\]
for all $v\in W^{1,p}(\Omega)$. 

The first nontrivial Neumann eigenvalue $\mu_{1,p}$ can be characterized as
$$
\mu_{1,p}(\Omega)=\min\left\{\frac{\int\limits_{\Omega}|\nabla u(x)|^p~dx}{\int\limits_{\Omega}|u(x)|^p~dx}: u\in W^{1}_{p}(\Omega)\setminus\{0\},\,
\int\limits_{\Omega}|u|^{p-2}u~dx=0\right\}.
$$

Moreover, $\mu_{1,p}(\Omega)^{-\frac{1}{p}}$ is the best constant $B_{p,p}(\Omega)$ (see, for example, \cite{BCT15}) in the following Poincar\'e inequality 
$$
\inf\limits_{c\in\mathbb R}\|f-c \mid L_p(\Omega)\|\leq B_{p,p}(\Omega)\|\nabla f \mid L_p(\Omega)\|,\,\,\, f\in W^{1}_{p}(\Omega).
$$

The Theorem~\ref{thm:PoincareEnCompPP} immediately implies the following lower estimate for $\mu_{1,p}(\tilde{\Omega})$:

\begin{thm} 
\label{thm:estimate}
Suppose that there exists a $p$-quasiconformal homeomorphism  $\varphi: \Omega\to\widetilde{\Omega}$, of a bounded Lipschitz domain 
$\Omega\subset\mathbb R^n$ onto $\widetilde{\Omega}$, such that 
$$
M_p(\Omega)=\operatorname{ess}\sup\limits_{x\in \Omega}\left|J(x,\varphi)\right|^{\frac{1}{p}}<\infty.
$$
Then 
$$
\frac{1}{\mu_{1,p}(\widetilde{\Omega})}\leq K^p_{p}(\Omega) M^p_p(\Omega)\frac{1}{\mu_{1,p}(\Omega)}.
$$
\end{thm}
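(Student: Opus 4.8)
The plan is to reduce the assertion entirely to the diagonal case $r=p$ of Theorem~\ref{thm:PoincareEnCompPP} together with the variational reading of the first nontrivial eigenvalue. First I would rewrite both reciprocal eigenvalues as Poincar\'e constants: by the Min-Max (Rayleigh quotient) characterization recalled above, $\mu_{1,p}(\Omega)^{-1/p}$ equals the best constant $B_{p,p}(\Omega)$ in the $(p,p)$-Poincar\'e inequality $\inf_{c}\|f-c\mid L_p(\Omega)\|\le B_{p,p}(\Omega)\|\nabla f\mid L_p(\Omega)\|$, and likewise $\mu_{1,p}(\widetilde\Omega)^{-1/p}=B_{p,p}(\widetilde\Omega)$. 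This reformulation is the only genuinely analytic input: for $p\neq 2$ the minimizer $c$ in $\inf_c\|f-c\mid L_p\|$ solves $\int|f-c|^{p-2}(f-c)\,dx=0$ rather than being the arithmetic mean, and identifying it with the orthogonality constraint $\int|u|^{p-2}u\,dx=0$ appearing in the Rayleigh quotient is exactly the equivalence established in \cite{BCT15}, which I would cite rather than reprove.

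Second, I would check the hypotheses of Theorem~\ref{thm:PoincareEnCompPP} in the case $r=p$. Since $\Omega$ is a bounded Lipschitz domain it is a $(p,p)$-Sobolev-Poincar\'e domain, so $B_{p,p}(\Omega)<\infty$. The mapping $\varphi$ is weak $p$-quasiconformal, whence $K_p(\Omega)<\infty$ and $\varphi^{\ast}:L^1_p(\widetilde\Omega)\to L^1_p(\Omega)$ is bounded by Theorem~\ref{CompTh}; moreover the standing hypothesis $M_p(\Omega)=\ess\sup\limits_{x\in\Omega}|J(x,\varphi)|^{1/p}<\infty$ is precisely the endpoint $M_{p,p}(\Omega)<\infty$ of Theorem~\ref{thm:LpLq}. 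Feeding $r=p$ into Theorem~\ref{thm:PoincareEnCompPP} then delivers the $(p,p)$-Sobolev-Poincar\'e inequality on $\widetilde\Omega$ with the constant bound $B_{p,p}(\widetilde\Omega)\le K_p(\Omega)M_p(\Omega)B_{p,p}(\Omega)$.

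Finally I would combine the two steps. Substituting the eigenvalue identities into this constant estimate gives $\mu_{1,p}(\widetilde\Omega)^{-1/p}\le K_p(\Omega)M_p(\Omega)\,\mu_{1,p}(\Omega)^{-1/p}$, and raising to the $p$-th power yields the claimed $1/\mu_{1,p}(\widetilde\Omega)\le K_p^p(\Omega)M_p^p(\Omega)/\mu_{1,p}(\Omega)$. I expect the mapping-theoretic estimate to be immediate once $r=p$ is inserted into Theorem~\ref{thm:PoincareEnCompPP}; the main obstacle is instead securing that $\mu_{1,p}(\widetilde\Omega)$ is attained as a minimum, which needs compactness of $W^1_p(\widetilde\Omega)\hookrightarrow L_p(\widetilde\Omega)$. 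For that I would, exactly as in Theorem~A and Theorem~C, invoke Theorem~\ref{thm:PoincareEnCompPP} a second time with some $r>p$: this is legitimate because $|J(x,\varphi)|\le M_p(\Omega)^p$ a.e. forces $M_r(\Omega)\le M_p(\Omega)^{p/r}<\infty$ for every $r\ge p$, so a $(r,p)$-Sobolev-Poincar\'e inequality with $r>p$ holds on $\widetilde\Omega$ and the Rellich-Kondrachov argument of Corollary~\ref{Cor1} applies; finiteness of $B_{p,p}(\widetilde\Omega)$ additionally guarantees $\mu_{1,p}(\widetilde\Omega)>0$, so the stated inequality is meaningful.
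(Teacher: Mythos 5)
Your proposal is correct and takes essentially the same route as the paper: the paper derives this theorem as an immediate consequence of Theorem~\ref{thm:PoincareEnCompPP} with $r=p$, combined with the identification $\mu_{1,p}(\Omega)^{-1/p}=B_{p,p}(\Omega)$ cited from \cite{BCT15}, which is exactly your argument. Your extra observations (the bound $M_r(\Omega)\le M_p(\Omega)^{p/r}$ and the compactness of $W^1_p(\widetilde{\Omega})\hookrightarrow L_p(\widetilde{\Omega})$ via Corollary~\ref{Cor1}) merely fill in details the paper leaves implicit.
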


In the case of a corresponding integrability of Jacobian $J(x,\varphi$ in a corresponding degree we have by Theorem~\ref{thm:PoincareEnCompPQ}:

\begin{thm}
\label{thm:estrp}
Let a domain $\Omega\subset\mathbb R^n$ be a $(r,p)$-Sobolev-Poncar\'e domain, $p>1$, and there exists a weak $p$-quasiconformal homeomorphism $\varphi: \Omega\to\widetilde{\Omega}$ of a domain $\Omega$ onto a bounded domain $\widetilde{\Omega}$ such that
$$ 
M_{r,p}(\Omega)=\biggl(\int\limits _{\Omega}\left|J(x,\varphi)\right|^{\frac{r}{r-p}}~dx\biggl)^{\frac{r-p}{rp}}<\infty
$$
for some $r>p$.
Then  
$$
\frac{1}{\sqrt[p]{\mu_{1,p}(\widetilde{\Omega})}}\leq K_{p}(\Omega)M_{r,p}(\Omega)B_{r,p}(\Omega).
$$
\end{thm}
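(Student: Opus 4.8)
The plan is to obtain Theorem~\ref{thm:estrp} as the exponent-$s=p$ specialization of Theorem~\ref{thm:PoincareEnCompPQ}, and then to convert the resulting $(p,p)$-Sobolev-Poincar\'e inequality into the claimed lower bound for $\mu_{1,p}(\widetilde{\Omega})$ through its variational characterization. Since $r>p$ by hypothesis, the choice $s=p$ satisfies the requirement $s<r$ of Theorem~\ref{thm:PoincareEnCompPQ}, and the quantity $M_{r,s}(\Omega)$ evaluated at $s=p$ is exactly the hypothesis $M_{r,p}(\Omega)<\infty$. The domain $\Omega$ is assumed to be a $(r,p)$-Sobolev-Poincar\'e domain with $1<p\leq r$, so every hypothesis of Theorem~\ref{thm:PoincareEnCompPQ} is in place and the composition operator $\varphi^{\ast}\colon L^1_p(\widetilde{\Omega})\to L^1_p(\Omega)$ is bounded.

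First I would invoke Theorem~\ref{thm:PoincareEnCompPQ} with $s=p$ to obtain the $(p,p)$-Sobolev-Poincar\'e inequality
\[
\inf_{c\in\mathbb R}\biggl(\int_{\widetilde{\Omega}}|f(x)-c|^{p}~dx\biggr)^{\frac{1}{p}}\leq B_{p,p}(\widetilde{\Omega})\biggl(\int_{\widetilde{\Omega}}|\nabla f(x)|^{p}~dx\biggr)^{\frac{1}{p}},\quad f\in W^1_p(\widetilde{\Omega}),
\]
together with the explicit estimate $B_{p,p}(\widetilde{\Omega})\leq K_{p}(\Omega)M_{r,p}(\Omega)B_{r,p}(\Omega)$. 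Next I would use the characterization recalled at the beginning of this section, namely that $\mu_{1,p}(\widetilde{\Omega})^{-\frac{1}{p}}$ is precisely the best constant $B_{p,p}(\widetilde{\Omega})$ in this Poincar\'e inequality (with the infimum over constants $c$ on the left). Combining the identity with the bound just obtained yields
\[
\frac{1}{\sqrt[p]{\mu_{1,p}(\widetilde{\Omega})}}=B_{p,p}(\widetilde{\Omega})\leq K_{p}(\Omega)M_{r,p}(\Omega)B_{r,p}(\Omega),
\]
which is the asserted inequality. To ensure that $\mu_{1,p}(\widetilde{\Omega})$ is well defined and strictly positive, so that the left-hand side is meaningful, I would appeal to Corollary~\ref{Cor2}: it guarantees that the embedding $W^1_p(\widetilde{\Omega})\hookrightarrow L_p(\widetilde{\Omega})$ is compact, and this compactness secures the existence of a minimizer of the nonlinear Rayleigh quotient defining $\mu_{1,p}(\widetilde{\Omega})$.

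The computational content is routine once Theorem~\ref{thm:PoincareEnCompPQ} is available; the only genuinely delicate point is the identification $\mu_{1,p}(\widetilde{\Omega})^{-\frac{1}{p}}=B_{p,p}(\widetilde{\Omega})$ for the nonlinear operator. In contrast to the case $p=2$, this equality does not follow from a Min-Max argument but rests on the variational structure of the $p$-Laplace eigenvalue problem (as recorded via \cite{BCT15}). I would therefore take care to verify that the best Poincar\'e constant genuinely equals $\mu_{1,p}^{-1/p}$ rather than merely bounding one side by the other, and that the infimum over constants appearing on the left of the Poincar\'e inequality is compatible with the mean-zero normalization $\int_{\widetilde{\Omega}}|u|^{p-2}u~dx=0$ used in the definition of $\mu_{1,p}(\widetilde{\Omega})$.
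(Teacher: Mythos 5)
Your proposal is correct and follows essentially the same route as the paper: the paper presents this theorem as an immediate consequence of Theorem~\ref{thm:PoincareEnCompPQ} (specialized, as you do, to $s=p<r$), combined with the characterization $\mu_{1,p}(\widetilde{\Omega})^{-1/p}=B_{p,p}(\widetilde{\Omega})$ recalled at the start of that section via \cite{BCT15}. Your additional care about the minimizing constant $c$ matching the normalization $\int_{\widetilde{\Omega}}|u|^{p-2}u~dx=0$, and the appeal to Corollary~\ref{Cor2} for well-posedness, only make explicit what the paper leaves implicit.
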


As an example, consider the linear mapping $\varphi_l:\Omega\to\widetilde{\Omega}$ of space domains $\Omega,\widetilde{\Omega}$:
$$
\varphi_l(x)=\left(\begin{array}{cccc}
a_1 & 0 & ... & 0 \\
0 & a_2 & ... & 0 \\
... & ... & ... & ... \\
0 & ... & 0 & a_n
\end{array} \right)
\left(\begin{array}{cc}
x_1 \\
x_2 \\
... \\
x_n
\end{array} \right)
$$
Then
$$
K^p_p(\Omega)=\frac{\left(\max\{a_1^2,..., a_n^2\}\right)^{\frac{p}{2}}}{a_1\cdot ... \cdot a_n}\,\,\text{and}\,\, J(x,\varphi)=a_1\cdot ... \cdot a_n.
$$

\begin{exa}
Let $\Omega=\mathbb Q^n=(0,1)\times ... \times (0,1)$ be the unit cube, then 
$\varphi_l(\mathbb Q)=\mathbb P=(0,a_1)\times ... \times (0,a_n)$ is a parallelepiped.
By Theorem~\ref{thm:estimate} the following estimate
$$
\mu_{1,p}(\mathbb P)\geq \frac{1}{\frac{\left(\max\{a_1^2,..., a_n^2\}\right)^{\frac{p}{2}}}{a_1\cdot ... \cdot a_n}\cdot a_1\cdot ... \cdot a_n}\mu_p(\mathbb Q)
=\frac{1}{\left(\max\{a_1^2,..., a_n^2\}\right)^{\frac{p}{2}}}\mu_{1,p}(\mathbb Q^n)
$$
\end{exa}
is correct.

To the best of our knowledge the exact value of $\mu_{1,p}(\mathbb Q)$ is unknown. This example gives a rate of changing of the eigenvalue. The same remark is correct for our second example also. 

\begin{exa}
Let $\Omega=\mathbb B^n$ be the unit ball, then 
$$
\varphi_l(\mathbb B)=E=\left\{x\in R^n : \frac{x_1^2}{a_1^2}+...+\frac{x_n^2}{a_n^2}\leq 1\right\}.
$$
is an ellipsoid. 

By the Theorem~\ref{thm:estimate} the following estimate 
$$
\mu_{1,p}(E)\geq \frac{1}{\frac{\left(\max\{a_1^2,..., a_n^2\}\right)^{\frac{p}{2}}}{a_1\cdot ... \cdot a_n}\cdot a_1\cdot ... \cdot a_n}\mu_p(\mathbb B)
=\frac{1}{\left(\max\{a_1^2,..., a_n^2\}\right)^{\frac{p}{2}}}\mu_{1,p}(\mathbb B).
$$
is correct.
\end{exa}

\begin{rem} In the case of the first non-trivial Neumann eigenvalue for the Laplacian we have
$$
\mu_1(E)\geq \frac{p_{n/2}}{\max\{a_1^2,..., a_n^2\}},
$$
where $p_{n/2}$ denotes the first positive zero of the function $(t^{1-n/2}J_{n/2}(t))'$.
\end{rem}

\section{Appendix: On weak $p$-quasiconformal mappings}

In the first part of this section section we collect basic facts about weak $p$-quasiconformal mappings from our previous works. This class of mappings arises as a natural generalization of quasiconformal mappings.  For $n$-dimensional domains  and  $p=n$ this class coincides with quasiconformal mappings.

In the second part is devoted on the case $p=n-1$. We are focused on this case, because in $3$-dimensional case this class was used for spectral problems of Neumann-Laplace operators. Results of this subsections are new but based on our previous works. 

Unfortunately basic properties of weak $2$-quasiconformal homeomorphisms in dimension more then $3$ are much less known and represent open problems.

\subsection{Generalized quasiconformal mappings}

Let $\Omega\subset\mathbb R^n$ be an open set and $\varphi$ is a weak $p$-quasiconformal homeomorphism. Because $\varphi \in W^1_{1,loc}$ the formal Jacobi matrix
$D\varphi(x)=\left(\frac{\partial \varphi_i}{\partial x_j}(x)\right)$, $i,j=1,\dots,n$, and its determinant (Jacobian) $J(x,\varphi)=\det D\varphi(x)$ are well defined at almost all points $x\in \Omega$. The norm $|D\varphi(x)|$ of the matrix
$D\varphi(x)$ is the norm of the corresponding linear operator $D\varphi (x):\mathbb R^n \rightarrow \mathbb R^n$ defined by the matrix $D\varphi(x)$.

For a mapping $\varphi: \Omega\to \mathbb R^n$ of the class $L^1_{p,\loc}(\Omega)$ we define the local $p$-distortion
$$
K_p(x)=\inf\{ k: |D\varphi(x)|\leq k |J(x,\varphi)|^{\frac{1}{p}}, \,\,x\in D\}.
$$
A mapping $\varphi: \Omega\to \mathbb R^n$ of the class $L^1_{p,\loc}(\Omega)$ has a finite distortion if the dilatation function $K_p$ is well defined at almost all points $x\in \Omega$, i.~e.  $D\varphi(x)=0$ for almost all points $x$
that belongs to set $Z=\{x\in \Omega:J(x,\varphi)=0\}$.

Necessity of studying of Sobolev mappings with finite distortion arises in problems of the non-linear elasticity theory \cite{B1, B2}.
In these works J.~M.~Ball introduced classes of mappings, defined on bounded domains $\Omega \in \mathbb R^n$:
$$
A^+_{p,q}(\Omega)=\{\varphi\in W^1_p(\Omega) : \adj D\varphi\in L_q(\Omega),\quad J(x,\varphi)>0 \quad\text{a.~e. in}\quad \Omega\},
$$
$p,q>n$, where $\adj D\varphi$ is the formal adjoint matrix to the Jacobi matrix $D\varphi$:
$$
\adj D\varphi(x)\cdot D\varphi(x) = \operatorname{Id} J(x,\varphi).
$$

By definition a weak $p$-quasiconformal homeomorphism  homeomorphism $\varphi:\Omega\to\widetilde{\Omega}$ has finite distortion
and the local $p$-dilatation
$$
K_p(\Omega)=\ess\sup\limits_{x\in\Omega}K_p(x):=\left(\ess\sup\limits_{x\in\Omega}\frac{|D\varphi(x)|^p}{|J(x,\varphi)|}\right)^{\frac{1}{p}}
$$
is bounded. We called the quantity $K_p(\Omega)$ the weak $p$-quasiconformalty coefficient (dilatation). 

We need the notion of the $p$-capacity for a further description of the weak $p$-quasiconformal mappings. Let a domain $\Omega\subset\mathbb R^n$ and $F_{0}$, $F_{1}$ be two disjoint compact subset of $\Omega$. We call the triple $E=(F_{0},F_{1};\Omega)$ a condenser.

The value \[
\cp_p(E)=\cp_p(F_{0},F_{1};\Omega)=\inf\int\limits _{\Omega}|\nabla v|^{p}~dx,
\]
where the infimum is taken over all non negative functions $v\in C(\Omega)\cap L_{p}^{1}(\Omega)$,
such that $v=0$ in a neighborhood of the set $F_{0}$, and $v\geq1$
in a neighborhood of the set $F_{1}$, is called the $p$-capacity
of the condenser $E=(F_{0},F_{1};\Omega)$. 

For finite values of $p$-capacity $0<\cp(F_{0},F_{1};\Omega)<+\infty$ and $1<p<\infty$ there exists a unique function $u_{0}$ (an extremal function) such
that: \[
\cp_p(F_{0},F_{1};\Omega)=\int\limits _{\Omega}|\nabla u_{0}|^{p}~dx.\]
 An extremal function is continuous in $\Omega$, monotone in the
domain $\Omega\setminus(F_{0}\cup F_{1})$, equal to zero on $F_{0}$
and is equal to one on $F_{1}$ \cite{HKM,VGR}. 

Extremal functions are dense in Sobolev spaces. Denote by $E_p(\Omega)$ the set of extremal functions for the $p$-capacity of all pairs of connected compact
sets $F_0,F_1\subset \Omega$ with nonempty interior whose boundary points are regular with respect to the open set
$\Omega\setminus(F_0\cup F_1)$.

\begin{thm}
\label{Extr} \cite{VGR} Let $1<p<\infty$. There exists a countable collection of functions $v_k\in E_p(\Omega)$, $k=1,2,...$,
such that, for every function $u\in L^1_p(\Omega)$ and every $\varepsilon>0$, $u$ can be represented as a linear combination 
$u=c_0+\sum\limits_{k=1}^{\infty}c_k v_k$ and
$$
\|u\mid L^1_p(\Omega)\|\leq\sum\limits_{k=1}^{\infty}\|c_kv_k\mid L^1_p(\Omega)\|\leq \|u\mid L^1_p(\Omega)\|+\varepsilon.
$$
\end{thm}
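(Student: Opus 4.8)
The statement is a representation theorem, so the plan is constructive: fix once and for all a countable family $\{v_k\}\subset E_p(\Omega)$ of extremal functions attached to condensers rich enough to resolve arbitrary level sets, and for a given $u$ produce the coefficients by a layer-cake (level-set) decomposition. The lower bound $\|u\mid L^1_p(\Omega)\|\le\sum_k\|c_kv_k\mid L^1_p(\Omega)\|$ will be automatic: once $\nabla u=\sum_k c_k\nabla v_k$ holds (absolute convergence of the gradients in $L_p$), it is nothing but Minkowski's inequality for the $L_p$-norm of the gradient. All the work is in arranging the representation so that the accumulated seminorm overshoots $\|u\mid L^1_p(\Omega)\|$ by no more than $\varepsilon$, i.e. so that the seminorm-energies are \emph{nearly additive}.

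First I would reduce to a convenient dense subclass: smooth (or merely bounded Lipschitz) functions are dense in $L^1_p(\Omega)$ in the seminorm, so it suffices to treat such $u$ and then pass to the limit, the seminorm cost of the approximation being folded into $\varepsilon$. For such $u$ I would exploit the layer-cake structure. Choosing a fine partition $\min u=t_0<t_1<\dots<t_N=\max u$ of the range, the super-level sets $\{u\ge t_j\}$ are nested, and the capacitary potentials $w_j$ of the condensers built from $\{u\ge t_{j-1}\}$ and $\{u\ge t_j\}$ reconstruct $u$ through $u\approx t_0+\sum_j(t_j-t_{j-1})\,w_j$. Since each $\nabla w_j$ is concentrated on the annular region $\{t_{j-1}<u<t_j\}$ and these regions are pairwise disjoint, the $p$-energies $\int_\Omega|\nabla w_j|^p\,dx$ add up: the sum $\sum_j\|(t_j-t_{j-1})w_j\mid L^1_p(\Omega)\|^p$ then differs from $\|u\mid L^1_p(\Omega)\|^p$ by an amount governed by the mesh of the partition, and this near-additivity of the energies is exactly the engine behind the two-sided bound. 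Here the recalled properties of extremal functions (continuity, monotonicity in $\Omega\setminus(F_0\cup F_1)$, and $\cp_p(F_0,F_1;\Omega)=\int_\Omega|\nabla w|^p\,dx$) enter decisively.

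The point that requires care, and which I expect to be the main obstacle, is that the family $\{v_k\}$ must be fixed \emph{in advance} and independent of $u$, whereas the level sets $\{u\ge t_j\}$ depend on $u$. I would resolve this by fixing a countable collection of standard condensers --- say those whose plates are finite unions of closed dyadic cubes, or closed balls with rational data, meeting the connectedness and boundary-regularity requirements in the definition of $E_p(\Omega)$ --- and approximating each super-level set from inside and outside by members of this collection. Monotonicity and continuity of the capacitary extremal functions, together with continuity of $\cp_p(F_0,F_1;\Omega)$ under such approximations, guarantee that replacing the exact level-set condensers by standard ones perturbs the potentials and their energies by as little as we wish. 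Checking this stability quantitatively, uniformly enough to keep the additivity error under control, is the delicate part of the argument.

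Finally I would assemble the estimate by a telescoping/diagonal argument. Starting from a Lipschitz approximant $u_1$ with $\|u-u_1\mid L^1_p(\Omega)\|$ small, I decompose $u_1$ by the layer-cake construction into finitely many standard extremal functions with nearly additive energies; the residual $u-u_1\in L^1_p(\Omega)$ has small seminorm and is treated by the same procedure with a geometrically decreasing tolerance. Summing the geometric series of the residual seminorms keeps the total overshoot below $\varepsilon$, and reindexing every standard extremal function that occurs into a single sequence $\{v_k\}$ yields the representation $u=c_0+\sum_k c_kv_k$ together with the asserted control, the lower inequality being Minkowski and the upper one the near-additivity produced above.
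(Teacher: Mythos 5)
The paper itself offers no proof of this statement --- it is quoted from \cite{VGR} with a citation only --- so your proposal has to be judged against the statement itself, and there it has a fatal gap. Your engine for the upper bound is the layer-cake decomposition together with ``near-additivity of energies,'' but what disjointness of the annuli $\{t_{j-1}<u<t_j\}$ actually gives is additivity of the \emph{$p$-th powers}: $\sum_j\|c_jw_j\mid L^1_p(\Omega)\|^p\approx\|u\mid L^1_p(\Omega)\|^p$. The theorem demands control of the sum of the seminorms themselves, and the two are incompatible as soon as the decomposition has many comparable pieces: if $N$ terms have equal seminorm $a$ and the $p$-th powers add, then $\|u\mid L^1_p(\Omega)\|=N^{1/p}a$ while $\sum_j\|c_jw_j\mid L^1_p(\Omega)\|=Na=N^{1-1/p}\,\|u\mid L^1_p(\Omega)\|$. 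So refining the partition --- which you must do to make the layer-cake representation accurate --- drives your upper bound to infinity rather than to $\|u\mid L^1_p(\Omega)\|+\varepsilon$; accuracy and $\ell^1$-control pull in opposite directions, and no choice of mesh resolves the tension. The near-isometric $\ell^1$ bound is precisely the nontrivial content of the theorem, and it cannot be produced by level-set slicing, whose output is $\ell^p$-additive by construction.

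Your final greedy/telescoping paragraph is the correct general mechanism: it would finish the proof if, for every $w\in L^1_p(\Omega)$ and $\delta>0$, you could exhibit a finite combination $\sum_i c_iv_{k_i}$ from the fixed countable family with both $\|w-\sum_i c_iv_{k_i}\mid L^1_p(\Omega)\|<\delta$ and $\sum_i\|c_iv_{k_i}\mid L^1_p(\Omega)\|\leq\|w\mid L^1_p(\Omega)\|+\delta$. But that near-$\ell^1$-additive approximation property is exactly what the layer-cake construction fails to supply, so the iteration has no fuel; and mere density of finite combinations is not enough, since a combination close to $w$ may have enormous coefficients with cancellation. That this property is delicate can be seen already in one dimension, where extremal functions are exactly the ``ramp'' functions: for the tent function $u$ on $(0,1)$ (up on $(0,1/2)$, down on $(1/2,1)$, $p=2$) every representation $u'=\sum_k c_kv_k'$ by ramps satisfies $1=\langle u',u'\rangle=\sum_k c_k\langle v_k',u'\rangle\leq\sup_k|\langle v_k'/\|v_k'\|_{L_2},u'\rangle|\sum_k\|c_kv_k\mid L^1_2\|$ with $\sup_k|\langle v_k'/\|v_k'\|_{L_2},u'\rangle|\leq 1/\sqrt{2}$, forcing $\sum_k\|c_kv_k\mid L^1_2\|\geq\sqrt{2}$, far above $\|u\mid L^1_2\|+\varepsilon=1+\varepsilon$. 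In dimension $n\geq 2$ the statement survives only because the plates of condensers may be chosen as connected sets adapted to the \emph{global} sublevel structure of $u$ (e.g.\ joined by tubes of negligible capacity), which is a completely different use of extremal functions than mesh-refined slices between consecutive levels. This global, capacity-theoretic construction is the hard core of the argument in \cite{VGR}, and the proposal does not reach it.
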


The following $p$-capacitary description of composition operators on Sobolev spaces is correct:

\begin{thm}
\label{CompCap} \cite{VG77} A homeomorphism $\varphi:\Omega\to\widetilde{\Omega}$
induces by the composition rule $\varphi^{\ast}(f)=f\circ\varphi$ a bounded composition
operator 
\[
\varphi^{\ast}:L^1_p(\widetilde{\Omega})\to L^1_p(\Omega),\,\,\,1<p<\infty,
\]
if and only if for any condenser $(F_{0}, F_{1})\subset\widetilde{\Omega}$ the inequality
$$
\cp_p^{\frac{1}{p}}(\varphi^{-1}(F_0), \varphi^{-1}(F_1);\Omega)\leq K_p\cp_p^{\frac{1}{p}}(F_{0}, F_{1};\widetilde{\Omega})
$$
holds.
\end{thm}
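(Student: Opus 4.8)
The plan is to establish the two implications separately: the forward implication (a bounded operator yields the capacitary inequality) is elementary and proceeds by testing on admissible functions, while the converse (the capacitary inequality yields boundedness) I would reduce to the analytic characterization of Theorem~\ref{CompTh}. I expect the converse to be the main difficulty.

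For necessity, suppose $\varphi^{\ast}\colon L^1_p(\widetilde{\Omega})\to L^1_p(\Omega)$ is bounded with norm at most $K_p$, and fix a condenser $(F_0,F_1)\subset\widetilde{\Omega}$. Let $v\in C(\widetilde{\Omega})\cap L^1_p(\widetilde{\Omega})$ be any admissible function for this condenser, so that $v=0$ in a neighbourhood of $F_0$ and $v\geq 1$ in a neighbourhood of $F_1$. Since $\varphi$ is a homeomorphism, $\varphi^{-1}$ sends these neighbourhoods onto neighbourhoods of $\varphi^{-1}(F_0)$ and $\varphi^{-1}(F_1)$; hence $\varphi^{\ast}(v)=v\circ\varphi$ is continuous and admissible for the condenser $(\varphi^{-1}(F_0),\varphi^{-1}(F_1);\Omega)$. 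By the definition of $p$-capacity and boundedness of $\varphi^{\ast}$,
$$
\cp_p^{\frac{1}{p}}(\varphi^{-1}(F_0),\varphi^{-1}(F_1);\Omega)\leq\|\varphi^{\ast}(v)\mid L^1_p(\Omega)\|\leq K_p\|v\mid L^1_p(\widetilde{\Omega})\|.
$$
Taking the infimum over all admissible $v$ gives the capacitary inequality.

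For sufficiency, assume the capacitary inequality holds for every condenser in $\widetilde{\Omega}$; the aim is to verify the two hypotheses of Theorem~\ref{CompTh}, namely $\varphi\in W^1_{1,\loc}(\Omega)$ with finite distortion and $K_p(\varphi;\Omega)<\infty$. A direct reduction via the extremal functions of Theorem~\ref{Extr} is tempting but does not close immediately: if $v$ is the extremal function of $(F_0,F_1;\widetilde{\Omega})$, then $v\circ\varphi$ is only admissible (not extremal) for the pulled-back condenser, so its Dirichlet energy is bounded below, not above, by the corresponding capacity, and the inequality cannot be read off. Instead I would extract the pointwise distortion bound locally: applying the capacitary inequality to shrinking spherical ring condensers centred at $y_0=\varphi(x_0)$ and invoking the classical two-sided estimates for the $p$-capacity of rings converts the hypothesis into control of the local oscillation of $\varphi^{-1}$, which yields local $p$-integrability of $D\varphi$ and absolute continuity on almost all lines, hence $\varphi\in W^1_{1,\loc}(\Omega)$ with finite distortion. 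To obtain $K_p(\varphi;\Omega)<\infty$ I would introduce the monotone, countably additive set function on open subsets $A\subset\widetilde{\Omega}$ generated by the capacities of condensers lying in $A$, which the hypothesis makes bounded, and differentiate it with respect to the measure $|J(x,\varphi)|\,dx$; a Lebesgue differentiation argument then produces $|D\varphi(x)|^p\leq K_p^p\,|J(x,\varphi)|$ for almost every $x$. With both hypotheses verified, Theorem~\ref{CompTh} gives the boundedness of $\varphi^{\ast}$, and the density theorem~\ref{Extr} together with the subadditivity of the $L^1_p$-seminorm under the series representation $u=c_0+\sum_k c_k v_k$ confirms that the resulting norm bound is governed by the same constant $K_p$.

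The main obstacle is precisely this passage from the global capacitary estimate to the almost-everywhere distortion inequality: one must simultaneously establish the Sobolev regularity of $\varphi$ and differentiate the capacitary set function, the delicate point being, as noted above, that the compositions of extremal functions are only admissible for the pulled-back condensers, so the distortion bound has to be recovered through the ring-condenser asymptotics and the set-function machinery rather than by a one-line comparison.
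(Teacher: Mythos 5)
Your necessity argument is correct and is the standard one: the composition of an admissible function with the homeomorphism is admissible for the pulled-back condenser, and taking the infimum transfers the operator bound to capacities. Note, however, that the paper gives no proof of this theorem at all --- it is quoted from \cite{VG77} --- so your sufficiency plan must be judged against that source, and there it has a genuine gap: both mechanisms you substitute for the extremal-function comparison fail. First, shrinking ring condensers cannot control the oscillation of $\varphi^{-1}$ for general $1<p<\infty$: for $p\le n-1$ a nondegenerate continuum (e.g.\ a segment) has zero $p$-capacity, so there is no lower estimate of $\cp_p(\varphi^{-1}(F_0),\varphi^{-1}(F_1);\Omega)$ in terms of the diameters of the plates; and even in the range where Loewner-type estimates exist, oscillation control of $\varphi^{-1}$ does not yield the $W^1_{1,\loc}$-regularity of $\varphi$ that Theorem \ref{CompTh} requires. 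Second, the set-function differentiation runs the inequality the wrong way: the hypothesis bounds \emph{capacities} of pullbacks, i.e.\ \emph{infima} of Dirichlet energies, whereas the bound $|D\varphi(x)|^p\le K_p^p|J(x,\varphi)|$ needs an upper bound on the \emph{actual} energy $\int_{\varphi^{-1}(B)}|D\varphi|^p\,dx$ of concrete test compositions over small balls $B$; testing an admissible function only gives $\cp_p(\text{pullback})\le\text{energy}$, never the reverse. Moreover the set function you propose (suprema of pullback capacities over condensers contained in an open set $A$) is not bounded by anything comparable to $|A|$: two plates close together inside a tiny ball already have arbitrarily large capacity, and the hypothesis bounds the pullback capacity only by that large quantity. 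So neither the Sobolev regularity nor the pointwise distortion estimate is actually reached, and the reduction to Theorem \ref{CompTh} does not close.

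The classical repair --- and the actual proof in \cite{VG77} --- refines precisely the comparison you correctly flagged as failing for a single condenser. Two facts are needed: (i) if $v$ is extremal for $(F_0,F_1;\widetilde{\Omega})$ with $\cp_p(F_0,F_1;\widetilde{\Omega})=C$, then for all $0\le t<s\le 1$ the level-set condensers satisfy $\cp_p(\{v\le t\},\{v\ge s\};\widetilde{\Omega})=C(s-t)^{1-p}$ (the series composition law for capacities); and (ii) the capacitary description of the Dirichlet integral (see \cite{VGR}): a continuous function $w$ belongs to $L^1_p(\Omega)$ if and only if
\[
\sup\sum_{i}(t_{i+1}-t_i)^{p}\,\cp_p\bigl(\{w\le t_i\},\{w\ge t_{i+1}\};\Omega\bigr)<\infty,
\]
the supremum taken over finite partitions $\{t_i\}$, and then $\int_{\Omega}|\nabla w|^p\,dx$ equals this supremum. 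Applying your hypothesis to every level-set condenser of $v$ (modulo an exhaustion argument, since the plates are closed rather than compact) and summing over a partition of $[0,1]$ gives
\[
\|\nabla(v\circ\varphi)\mid L_p(\Omega)\|^p\le K_p^p\,C\sum_{i}(t_{i+1}-t_i)=K_p^p\,\|v\mid L^1_p(\widetilde{\Omega})\|^p,
\]
so the one-line comparison that fails for a single condenser closes at the level of the full partition family. The density Theorem \ref{Extr} then extends the bound from extremal functions to all of $L^1_p(\widetilde{\Omega})$ via the series representation, with no appeal to Theorem \ref{CompTh} at all --- which is as it should be, since historically the analytic characterization of Theorem \ref{CompTh} is \emph{derived from} this capacitary description; your reduction, while not formally circular within this paper, inverts that logical order and then stalls exactly where the inverted order demands new analysis.
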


This theorem yields that mappings, which generates bounded composition operators on Sobolev spaces, preserve sets of capacity zero. On this base we deal only with quasi-continuous representations of Sobolev functions.

\begin{thm}
\label{CompThP} \cite{GGR} A homeomorphism $\varphi:\Omega\to\widetilde{\Omega}$
between two domains $\Omega$ and $\widetilde{\Omega}$ is a weak $p$-quasiconformal, $1\leq p<\infty$,
if and only if $\varphi$ induces by the composition rule $\varphi^{\ast}(f)=f\circ\varphi$ a bounded composition
operator 
\[
\varphi^{\ast}:L^1_p(\widetilde{\Omega})\to L^1_p(\Omega).
\]
\end{thm}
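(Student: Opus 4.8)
The plan is to read the statement as a reformulation of Theorem~\ref{CompTh}: by definition a homeomorphism $\varphi$ is weak $p$-quasiconformal precisely when $\varphi\in W^1_{1,\loc}(\Omega)$, has finite distortion, and the dilatation $K_p(\Omega)=\left(\ess\sup_{x\in\Omega}|D\varphi(x)|^p/|J(x,\varphi)|\right)^{1/p}$ is finite, and these are exactly the analytic conditions that Theorem~\ref{CompTh} attaches to a bounded composition operator $\varphi^{\ast}:L^1_p(\widetilde\Omega)\to L^1_p(\Omega)$. Thus the cheapest route is simply to invoke Theorem~\ref{CompTh} together with the definition. Since, however, the appendix has just assembled the capacitary description (Theorem~\ref{CompCap}) and the density of extremal functions (Theorem~\ref{Extr}), I would prefer to give a self-contained capacitary proof of the equivalence, which is what those two results were prepared for.

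For the direction ``weak $p$-quasiconformal $\Rightarrow$ bounded operator'' I would argue via condensers. Fix a condenser $(F_0,F_1)\subset\widetilde\Omega$ with $0<\cp_p(F_0,F_1;\widetilde\Omega)<\infty$ and let $u_0\in E_p(\widetilde\Omega)$ be its extremal function. By continuity of $\varphi$ the composition $\varphi^{\ast}(u_0)=u_0\circ\varphi$ vanishes near $\varphi^{-1}(F_0)$ and exceeds $1$ near $\varphi^{-1}(F_1)$, so it is admissible for the condenser $(\varphi^{-1}(F_0),\varphi^{-1}(F_1);\Omega)$, whence
$$
\cp_p(\varphi^{-1}(F_0),\varphi^{-1}(F_1);\Omega)\leq\int_\Omega|\nabla(u_0\circ\varphi)|^p\,dx.
$$
The chain-rule and change-of-variables estimate already used in the Introduction bounds the right-hand side by $K_p^p\int_{\widetilde\Omega}|\nabla u_0|^p\,dy=K_p^p\,\cp_p(F_0,F_1;\widetilde\Omega)$; taking $p$-th roots gives the capacitary inequality of Theorem~\ref{CompCap}, so $\varphi^{\ast}$ is bounded with $\|\varphi^{\ast}\|\leq K_p(\Omega)$. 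Equivalently, one can verify the operator bound directly on the countable family $v_k\in E_p(\widetilde\Omega)$ of Theorem~\ref{Extr} and pass to general $u\in L^1_p(\widetilde\Omega)$ through the density estimate there.

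The substantive direction is ``bounded operator $\Rightarrow$ weak $p$-quasiconformal,'' and this is where I expect the real work. From boundedness, Theorem~\ref{CompCap} furnishes the capacitary inequality for every condenser in $\widetilde\Omega$. I would first recover weak differentiability: applying $\varphi^{\ast}$ to locally truncated coordinate functions shows each component $\varphi_i\in L^1_{p,\loc}(\Omega)$, hence $\varphi\in W^1_{1,\loc}(\Omega)$ and the formal matrix $D\varphi$ and Jacobian $J(x,\varphi)$ are defined almost everywhere. The harder points are finite distortion (that $D\varphi(x)=0$ a.e. on $Z=\{J(x,\varphi)=0\}$) and the pointwise bound $|D\varphi(x)|^p\leq K_p^p\,|J(x,\varphi)|$ a.e. Both are obtained by a localization/Lebesgue-point argument: at almost every point one tests the capacitary inequality on small condensers adapted to the linear map $D\varphi(x_0)$, choosing the direction that realizes $|D\varphi(x_0)|$, and lets the condensers shrink so that the capacities converge to their infinitesimal Euclidean values; the constant $K_p$ of the inequality then transfers to the pointwise distortion estimate. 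The main obstacle is precisely this passage from the global capacitary inequality to the a.e. pointwise dilatation bound, together with the verification of the finite-distortion condition that legitimizes the change of variables. Once these are in place one has $K_p(\Omega)\leq\|\varphi^{\ast}\|<\infty$, so $\|\varphi^{\ast}\|=K_p(\Omega)$ and $\varphi$ is weak $p$-quasiconformal by definition.
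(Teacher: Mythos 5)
Your ``cheapest route'' is in fact exactly the paper's treatment of this statement: Theorem \ref{CompThP} is stated with a citation to \cite{GGR} and is never proved in the paper, because it is a verbatim restatement of Theorem \ref{CompTh} --- the hypotheses there ($\varphi\in W^1_{1,\loc}(\Omega)$, finite distortion, $K_p(\varphi;\Omega)<\infty$) are word-for-word the definition of weak $p$-quasiconformality given in Section 2, with $K_p(\varphi;\Omega)$ and $K_p(\Omega)$ denoting the same quantity. So the reduction you describe in your first paragraph is complete and coincides with what the paper does.

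The self-contained capacitary proof you prefer is a genuinely different route --- it reconstructs the strategy of the cited sources \cite{GGR,VG77} --- but as written it is an outline with real gaps, which you partly acknowledge. In the forward direction, the extremal function $u_0$ equals $0$ on $F_0$ and $1$ on $F_1$ but not in neighborhoods of those sets, so $u_0\circ\varphi$ is not admissible for the condenser in the sense the paper defines capacity; a truncation/limiting argument is needed. More seriously, the ``chain-rule and change-of-variables estimate already used in the Introduction'' was derived there for \emph{diffeomorphisms}; for a Sobolev homeomorphism of finite distortion composed with a function that is merely Sobolev (extremal functions need not be Lipschitz), the pointwise chain rule and the area-formula inequality $\int_\Omega g(\varphi(x))\,|J(x,\varphi)|~dx\leq\int_{\widetilde{\Omega}}g(y)~dy$ are precisely the technical content of the theorem being proved, so invoking them informally makes the argument circular if the goal is self-containment. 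In the backward direction you correctly isolate the substantive step --- passing from the condenser inequality of Theorem \ref{CompCap} to the a.e.\ bound $|D\varphi(x)|^p\leq K^p\,|J(x,\varphi)|$ together with finite distortion --- but you only assert that a Lebesgue-point argument on shrinking condensers ``transfers'' the constant; the choice of condensers, the convergence of their capacities to the infinitesimal Euclidean values, and the treatment of the zero set of the Jacobian constitute the actual proof in \cite{GGR} and are not reproduced. In short: judged as the reduction you state first, the proposal is correct and identical to the paper; judged as a self-contained proof, the capacitary part is a faithful road map of the literature rather than a proof.
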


\subsection{Co-quasiconformal mappings}

In the spectral theory for the Laplace operator the significant role play weak $p$-quasiconformal mappings for $p=n-1$. 

\begin{rem} Note, that in the space $\mathbb R^3$ weak $2$-quasiconformal mappings can be characterized in terms of co-distortion:
$$
|D\varphi(x)|\leq K\min|\adj D\varphi(x)|\,\,\,\text{a.~e. in}\,\,\, \Omega.
$$

On this way it is natural to call weak $(n-1)$-quasiconformal mappings as {\it co-quasiconformal mappings} because these mappings are characterized in terms of "length" and "co-length" (area, if n=3). Similar notions were studied in \cite{S}.
\end{rem}

The class of space co-quasiconformal mappings is a natural analogue of planar quasiconformal mappings from the point of view of possible applications to PDE.
Let us discuss the case $p=n-1$ in more details. For this case we have the following "duality":

\begin{thm}
\label{dual} Let $\varphi:\Omega\to\widetilde{\Omega}$
be a weak $p$-quasiconformal homeomorphism, $p=n-1$. Then the inverse mapping $\varphi^{-1}$ 
induces by the composition rule $(\varphi^{-1})^{\ast}(g)=g\circ\varphi^{-1}$ a bounded composition
operator 
\[
(\varphi^{-1})^{\ast}:L^1_{\infty}({\Omega})\to L^1_{\infty}(\widetilde{\Omega}).
\]
\end{thm}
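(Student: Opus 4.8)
The plan is to reduce the statement to the assertion that $\varphi^{-1}$ is Lipschitz, and then to read off its Lipschitz constant from the pointwise $(n-1)$-distortion of $\varphi$; this realizes the ``duality'' between the $(n-1)$-distortion of $\varphi$ and the $\infty$-distortion (i.e. the Lipschitz bound) of $\varphi^{-1}$. The starting point is the elementary characterization at the endpoint exponent $p=\infty$: for a homeomorphism $\psi$ the seminorm $\|g\mid L^1_\infty\|=\ess\sup|\nabla g|$ is exactly the Lipschitz seminorm with respect to the intrinsic metric, so that $\psi^{\ast}:L^1_\infty(\Omega)\to L^1_\infty(\widetilde\Omega)$ is bounded if and only if $\psi$ is Lipschitz, and then $\|\psi^{\ast}\|$ equals the Lipschitz constant of $\psi$ (necessity by testing on the $1$-Lipschitz intrinsic distance functions, sufficiency by the chain rule). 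Taking $\psi=\varphi^{-1}$, it therefore suffices to bound $\ess\sup_{y\in\widetilde\Omega}|D\varphi^{-1}(y)|$.

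Next I would carry out the pointwise computation. Denote by $\sigma_1(x)\ge\dots\ge\sigma_n(x)\ge 0$ the singular values of $D\varphi(x)$, so that $|D\varphi(x)|=\sigma_1(x)$, $|J(x,\varphi)|=\sigma_1(x)\cdots\sigma_n(x)$ and $|\adj D\varphi(x)|=\sigma_1(x)\cdots\sigma_{n-1}(x)$. At almost every $x$ with $J(x,\varphi)\ne 0$ the map $\varphi^{-1}$ is differentiable at $y=\varphi(x)$ with $D\varphi^{-1}(y)=(D\varphi(x))^{-1}=\adj D\varphi(x)/J(x,\varphi)$, whence
$$
|D\varphi^{-1}(\varphi(x))|=\sigma_n(x)^{-1}=\frac{\sigma_1(x)\cdots\sigma_{n-1}(x)}{|J(x,\varphi)|}\le \frac{\sigma_1(x)^{n-1}}{|J(x,\varphi)|}=\frac{|D\varphi(x)|^{n-1}}{|J(x,\varphi)|}\le K_{n-1}^{n-1}(\Omega).
$$
Thus $|D\varphi^{-1}|\le K_{n-1}^{n-1}(\Omega)$ almost everywhere on $\widetilde\Omega$, which is exactly the desired bound on the Lipschitz constant.

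The hard part will be the regularity bookkeeping that converts this almost-everywhere derivative bound into an honest Lipschitz estimate: I must know that $\varphi^{-1}$ is $\operatorname{ACL}$ (equivalently, lies in $W^1_{1,\loc}(\widetilde\Omega)$) and that its a.e. derivative is $(D\varphi)^{-1}$ at the points where $J(x,\varphi)\ne 0$, neither of which is automatic for the inverse of a mere $W^1_{1,\loc}$ finite-distortion homeomorphism. I would supply this from the finite-distortion structure together with Theorem~\ref{CompThP}: since $\varphi$ induces a bounded operator $\varphi^{\ast}:L^1_{n-1}(\widetilde\Omega)\to L^1_{n-1}(\Omega)$ it preserves sets of capacity zero, and the associated Luzin-type properties make the change-of-variables formula applicable, so that the standard differentiability theory of finite-distortion homeomorphisms yields the inverse-derivative identity almost everywhere. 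Once $\varphi^{-1}$ is known to be $\operatorname{ACL}$ with $|D\varphi^{-1}|\le K_{n-1}^{n-1}(\Omega)$ a.e., integrating along almost every segment produces the intrinsic Lipschitz bound, and hence $\|(\varphi^{-1})^{\ast}(g)\mid L^1_\infty(\widetilde\Omega)\|\le K_{n-1}^{n-1}(\Omega)\,\|g\mid L^1_\infty(\Omega)\|$, which proves that $(\varphi^{-1})^{\ast}:L^1_\infty(\Omega)\to L^1_\infty(\widetilde\Omega)$ is bounded.
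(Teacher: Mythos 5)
Your pointwise algebra coincides exactly with the paper's: both arguments rest on the estimate $|\adj D\varphi(x)|\le |D\varphi(x)|^{n-1}$ (in your notation $\sigma_1\cdots\sigma_{n-1}\le\sigma_1^{n-1}$) combined with the distortion bound $|D\varphi|^{n-1}\le K_{n-1}^{n-1}(\Omega)\,|J(x,\varphi)|$, and your step~1 (boundedness of a composition operator on $L^1_\infty$ is equivalent to an intrinsic Lipschitz bound) is consistent with how the paper itself uses Theorem~\ref{dual} afterwards. The genuine gap is the step you dismiss as ``regularity bookkeeping.'' To convert the a.e.\ bound $|D\varphi^{-1}|\le K_{n-1}^{n-1}(\Omega)$ into a Lipschitz estimate you need $\varphi^{-1}$ to be ACL (equivalently $\varphi^{-1}\in W^1_{1,\loc}(\widetilde\Omega)$), differentiable a.e., and to satisfy $D\varphi^{-1}(\varphi(x))=(D\varphi(x))^{-1}$ a.e.; none of the tools you invoke provides this. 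Theorem~\ref{CompThP} and the capacitary characterization concern the \emph{forward} operator $\varphi^{\ast}$ and preservation of sets of capacity zero; they say nothing about weak differentiability of the inverse homeomorphism. An a.e.\ derivative bound without absolute continuity proves nothing (in one variable, $t\mapsto t+c(t)$ with $c$ the Cantor function is a homeomorphism with derivative $1$ a.e.\ that is not Lipschitz), and Sobolev regularity of inverses of finite-distortion homeomorphisms of class $W^{1,n-1}$ is a deep theorem in its own right (Hencl--Koskela, Cs\"ornyei--Hencl--Mal\'y), nowhere cited in this paper. Your reduction is also essentially circular relative to the paper's architecture: the Lipschitz continuity of $\varphi^{-1}$ is Corollary~\ref{lip} there, and it is deduced \emph{from} Theorem~\ref{dual}, not used to prove it.

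The paper's proof is engineered precisely to avoid differentiating $\varphi^{-1}$. It takes $f\in L^1_\infty(\widetilde\Omega)$, applies the chain-rule inequality $|J(x,\varphi)|\,|\nabla f|(\varphi(x))\le|\nabla(f\circ\varphi)(x)|\,|\adj D\varphi(x)|$ (from \cite{GU10}, valid a.e.\ and involving only derivatives of the forward map), bounds $|\adj D\varphi|\le|D\varphi|^{n-1}\le K_{n-1}^{n-1}(\Omega)\,|J(x,\varphi)|$, and transfers essential suprema from $\widetilde\Omega$ to $\Omega$ via the Luzin-type properties of $\varphi$ \cite{VU1}. This yields the lower bound $\|f\mid L^1_{\infty}(\widetilde{\Omega})\|\le K_{n-1}^{n-1}(\Omega)\,\|\varphi^{\ast}f\mid L^1_{\infty}(\Omega)\|$, and the bounded operator $(\varphi^{-1})^{\ast}$ is then obtained by inverting $\varphi^{\ast}$, so that no regularity of $\varphi^{-1}$ is ever needed. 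To repair your proposal you would have to either import the inverse-regularity theory for finite-distortion mappings as an explicit external ingredient, or restructure the argument at the level of test functions as the paper does.
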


\begin{proof}

Let $f\in L^1_{\infty}(\widetilde{\Omega})$. Then the composition $f\circ\varphi$ is weakly differentiable in $\Omega$.
Since $\varphi$ is the mapping of finite distortion, $D\varphi(x)=0$ for a.~e. $x\in \Omega$ where $J(x,\varphi)=0$. Hence we may define $\adj D\varphi(x)=0$ at such points.
Then  \cite{GU10}
$$
|J(x,\varphi)||\nabla f|(\varphi(x))\leq |\nabla(f\circ\varphi)|(x)\adj D\varphi(x) \,\,\,\text{for almost all}\,\,\,x\in \Omega.
$$

Because pre-image of a set measure zero has measure zero \cite{VU1}, we obtain
\begin{multline}
\|f\mid L^1_{\infty}(\widetilde{\Omega})\|=\ess\sup\limits_{y\in \widetilde{\Omega}}|\nabla f|(y)=
\ess\sup\limits_{x\in {\Omega}}|\nabla f|(\varphi(x))\\
\leq
\ess\sup\limits_{x\in {\Omega}}
|\nabla (f\circ\varphi)|(\varphi(x))\frac{|\adj D\varphi(x)|}{|J(x,\varphi)|}
\leq 
\ess\sup\limits_{x\in {\Omega}}|\nabla (f\circ\varphi)|(\varphi(x))\frac{|D\varphi(x)|^{n-1}}{|J(x,\varphi)|}\\
\leq K_{n-1}^{n-1}(\Omega)\cdot \|\varphi^{\ast}f\mid L^1_{\infty}(\Omega)\|.
\nonumber
\end{multline}

So, we have the lower estimate for the composition operator
for function $\varphi^{\ast}(f)$ of the class $L^1_{\infty}(\Omega)$.
Therefore, the inverse operator $(\varphi^{\ast})^{-1}=(\varphi^{-1})^{\ast}$ induces by the composition rule a bounded operator 
$$
(\varphi^{-1})^{\ast} : L^1_{\infty}(\Omega)\to L^1_{\infty}(\widetilde{\Omega}).
$$

\end{proof}

\begin{rem} By \$ 3.3 from \cite{M} any bounded composition operator $$
\psi^{\ast} : L^1_{\infty}(\Omega)\to L^1_{\infty}(\widetilde{\Omega}).
$$ is induced by a subareal homeomorphism. Therefore homeomorphisms  which are inverse to weak $(n-1)$-quasiconformal homeomorphisms are subareal mappings.
\end{rem}

Recall, that a homeomorphism $\varphi:\Omega\to \widetilde{\Omega}$ is called subareal \cite{M} if there exists a constant $K<\infty$ such that following inequality holds for any locally Lipschitz $(n-1)$-dimensional manifold $V\subset\Omega$:
$$
S(\varphi(V))\leq K S(V).
$$

\begin{cor}
\label{lip} Let $\varphi:\Omega\to\widetilde{\Omega}$
be a weak $(n-1)$-quasiconformal homeomorphism. Then the inverse mapping $\varphi^{-1}$ belongs to $\operatorname{Lip}(\widetilde{\Omega})$.
\end{cor}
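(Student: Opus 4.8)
The plan is to deduce the Lipschitz property of $\varphi^{-1}$ from the boundedness of the composition operator on $L^1_\infty$ established in Theorem~\ref{dual}, by testing that operator against the linear coordinate functions. First I would record that Theorem~\ref{dual} provides a bounded operator
$$
(\varphi^{-1})^{\ast}:L^1_{\infty}(\Omega)\to L^1_{\infty}(\widetilde{\Omega}),\qquad (\varphi^{-1})^{\ast}(g)=g\circ\varphi^{-1},
$$
whose operator norm is controlled by $K_{n-1}^{n-1}(\Omega)$. The key point is that membership in $L^1_\infty$ is exactly the requirement that the weak gradient be essentially bounded, so the coordinate projections $g_i(x)=x_i$, $i=1,\dots,n$, all lie in $L^1_\infty(\Omega)$ with $\|g_i\mid L^1_\infty(\Omega)\|=1$.

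Next I would apply the operator to these test functions. Since $g_i\circ\varphi^{-1}=(\varphi^{-1})_i$ is precisely the $i$-th coordinate function of the inverse homeomorphism, boundedness of $(\varphi^{-1})^{\ast}$ yields
$$
\|(\varphi^{-1})_i\mid L^1_\infty(\widetilde{\Omega})\|\leq K_{n-1}^{n-1}(\Omega)\,\|g_i\mid L^1_\infty(\Omega)\|=K_{n-1}^{n-1}(\Omega)
$$
for every $i=1,\dots,n$. Hence each component of $\varphi^{-1}$ has essentially bounded weak gradient, i.e. $\varphi^{-1}\in W^1_{\infty,\loc}(\widetilde{\Omega})$.

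Finally I would invoke the standard fact that a function with locally essentially bounded weak gradient on a domain admits a locally Lipschitz representative, with local Lipschitz constant bounded by the $L_\infty$-norm of its gradient. Applying this componentwise and combining the estimates above gives $\varphi^{-1}\in\operatorname{Lip}(\widetilde{\Omega})$, which is the assertion.

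The only delicate point — and the main (if minor) obstacle — is to confirm that the quasi-continuous representative produced by the composition operator coincides with the genuine continuous representative of $\varphi^{-1}$, so that the bound on the weak gradient truly controls the pointwise Lipschitz behaviour of the homeomorphism itself rather than merely an equivalence-class object. This is immediate in the present situation, because $\varphi^{-1}$ is continuous by hypothesis and a $W^1_{\infty,\loc}$ function possesses a unique continuous representative, so no ambiguity can arise.
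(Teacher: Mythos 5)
Your proposal is correct and follows the same route the paper intends: the paper states this corollary without an explicit proof, as an immediate consequence of Theorem~\ref{dual}, and your argument (testing the bounded operator $(\varphi^{-1})^{\ast}:L^1_{\infty}(\Omega)\to L^1_{\infty}(\widetilde{\Omega})$ on the coordinate functions, then passing from an essentially bounded weak gradient to the locally Lipschitz continuous representative, which coincides with $\varphi^{-1}$ by continuity) is precisely the standard way to make that deduction explicit. The only caveat, irrelevant to the qualitative statement here but relevant to the sharp bound $|D\varphi^{-1}(y)|\leq K_{n-1}^{n-1}(\Omega)$ invoked later in Corollary~\ref{oneone}, is that componentwise testing loses a dimensional factor; to recover the sharp constant one should test with the linear functions $x\mapsto v\cdot x$ over all unit vectors $v$.
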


\begin{cor}
\label{oneone} Let $\varphi:\Omega\to\widetilde{\Omega}$
be a weak $(n-1)$-quasiconformal homeomorphism. Then the mapping $\varphi$ 
induces by the composition rule $\varphi^{\ast}(f)=f\circ\varphi$ a bounded composition
operator 
\[
\varphi^{\ast}:L^1_{1}(\widetilde{\Omega})\to L^1_{1}(\Omega).
\]
\end{cor}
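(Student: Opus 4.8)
The plan is to read this corollary as the case $p=1$ of the composition criterion in Theorem~\ref{CompTh}. Since $\varphi$ is weak $(n-1)$-quasiconformal it already belongs to $W^1_{1,\loc}(\Omega)$ and has finite distortion, so the only thing left to check is that its $1$-distortion
$$
K_1(\Omega)=\ess\sup\limits_{x\in\Omega}\frac{|D\varphi(x)|}{|J(x,\varphi)|}
$$
is finite; once this is verified, Theorem~\ref{CompTh} with $p=1$ delivers the bounded operator $\varphi^{\ast}:L^1_1(\widetilde{\Omega})\to L^1_1(\Omega)$ at once.

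The input that makes $K_1$ finite is the preceding Corollary~\ref{lip}, according to which $\varphi^{-1}\in\operatorname{Lip}(\widetilde{\Omega})$; fix a constant $L$ with $|D\varphi^{-1}(y)|\leq L$ for almost every $y\in\widetilde{\Omega}$. First I would work at a point $x$ where $J(x,\varphi)\neq 0$ and pass to the singular values $\sigma_1(x)\geq\dots\geq\sigma_n(x)>0$ of $D\varphi(x)$, so that $|D\varphi(x)|=\sigma_1$ and $|J(x,\varphi)|=\sigma_1\cdots\sigma_n$. Using the a.e. identity $D\varphi^{-1}(\varphi(x))=[D\varphi(x)]^{-1}$ one gets $|D\varphi^{-1}(\varphi(x))|=1/\sigma_n(x)$, and the Lipschitz bound becomes $\sigma_n(x)\geq 1/L$.

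The estimate then follows from elementary singular-value bookkeeping:
$$
K_1(x)=\frac{|D\varphi(x)|}{|J(x,\varphi)|}=\frac{\sigma_1}{\sigma_1\cdots\sigma_n}=\frac{1}{\sigma_2\cdots\sigma_n}\leq\frac{1}{\sigma_n^{\,n-1}}\leq L^{n-1},
$$
where the first inequality uses $\sigma_i\geq\sigma_n$ for $i\geq 2$. On the set $Z=\{x:J(x,\varphi)=0\}$ finite distortion forces $D\varphi(x)=0$, so the quotient is read as $0$ there and causes no trouble. Taking the essential supremum gives $K_1(\Omega)\leq L^{n-1}<\infty$, which is exactly the hypothesis needed in Theorem~\ref{CompTh}.

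The one delicate point is transferring the almost-everywhere Lipschitz bound for $\varphi^{-1}$ from $\widetilde{\Omega}$ to an almost-everywhere statement on $\Omega$. Writing $B\subset\widetilde{\Omega}$ for the null set where $\varphi^{-1}$ is not differentiable or the relation $D\varphi^{-1}(\varphi(x))=[D\varphi(x)]^{-1}$ fails, I must argue that $\varphi^{-1}(B)$ is null in $\Omega$; this is precisely the Luzin $N$ property of $\varphi^{-1}$, which holds since $\varphi^{-1}$ is Lipschitz. Together with the standard a.e. differentiability of the mutually inverse mappings, this legitimizes the pointwise computation above at almost every $x\in\Omega$, and the proof is complete.
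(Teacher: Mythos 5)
Your proposal is correct and follows essentially the same route as the paper: both reduce the claim to the criterion of Theorem~\ref{CompTh}/\ref{CompThP} with $p=1$, both obtain the bound $K_1(\Omega)\leq L^{n-1}$ from the Lipschitz estimate $|D\varphi^{-1}(y)|\leq L$ of Corollary~\ref{lip} (your singular-value computation is exactly the matrix inequality $|A|/|\det A|\leq|A^{-1}|^{n-1}$ that the paper invokes via $|D\varphi^{-1}(y)|^{n-1}/|J(y,\varphi^{-1})|$), and both transfer the almost-everywhere bound from $\widetilde{\Omega}$ to $\Omega$ using the Luzin $N^{-1}$ property of $\varphi$ (equivalently, the $N$-property of the Lipschitz inverse). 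Your explicit treatment of the zero set $Z=\{J(x,\varphi)=0\}$ via the finite-distortion convention is a detail the paper leaves implicit, but it does not change the argument.
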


\begin{proof}
By Theorem~\ref{CompThP} the mapping $\varphi$ 
induces by the composition rule $\varphi^{\ast}(f)=f\circ\varphi$ a bounded composition
operator 
\[
\varphi^{\ast}:L^1_{1}(\widetilde{\Omega})\to L^1_{1}(\Omega).
\]
if and only if $\varphi$ is a mapping of finite distortion and
$$
K_1(\Omega)=\ess\sup\limits_{x\in\Omega}\frac{|D\varphi(x)|}{|J(x,\varphi)|}<\infty.
$$
By Corollary~\ref{lip}
$$
|D\varphi^{-1}(y)|\leq K_{n-1}^{n-1}(\Omega)\,\,\text{for almost all}\,\, y\in\widetilde{\Omega}.
$$ 
Hence
\begin{multline*}
\frac{|D\varphi(x)|}{|J(x,\varphi)|}
\leq \frac{|D\varphi^{-1}(y)|^{n-1}}{|J(y,\varphi^{-1})|}\frac{1}{|J(x,\varphi)|}\\
= |D\varphi^{-1}(y)|^{n-1}<\infty,
\,\,\,\text{for almost all}\,\,\, y=\varphi(x)\in\widetilde{\Omega}
\end{multline*}
Because $\varphi$ possesses $N^{-1}$-Luzin property, we have that 
$$
\ess\sup\limits_{x\in\Omega}\frac{|D\varphi(x)|}{|J(x,\varphi)|}<\infty
$$
and 
the mapping $\varphi$ 
induces by the composition rule $\varphi^{\ast}(f)=f\circ\varphi$ a bounded composition
operator 
\[
\varphi^{\ast}:L^1_{1}(\widetilde{\Omega})\to L^1_{1}(\Omega).
\]
\end{proof}

Using Marcinkiewicz interpolation theorem \cite{SW} and Corollary~\ref{oneone} we obtain

\begin{thm}
Let $\varphi:\Omega\to\widetilde{\Omega}$
be a $p$-quasiconformal homeomorphism, $p=n-1$. Then the mapping $\varphi$ 
induces by the composition rule $\varphi^{\ast}(f)=f\circ\varphi$ a bounded composition
operator 
\[
\varphi^{\ast}:L^1_{q}(\widetilde{\Omega})\to L^1_{q}(\Omega)
\]
for any $q\in [1,n-1]$.
\end{thm}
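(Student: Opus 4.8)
The plan is to obtain the full range $q\in[1,n-1]$ by interpolating between the two extreme exponents, where boundedness is already available. At $q=n-1$ the assertion is precisely the hypothesis: since $\varphi$ is weak $(n-1)$-quasiconformal, Theorem~\ref{CompThP} gives that $\varphi^{\ast}\colon L^1_{n-1}(\widetilde{\Omega})\to L^1_{n-1}(\Omega)$ is bounded, equivalently $K_{n-1}(\Omega)<\infty$. At the other endpoint $q=1$, Corollary~\ref{oneone} asserts that $\varphi^{\ast}\colon L^1_1(\widetilde{\Omega})\to L^1_1(\Omega)$ is bounded, which by Theorem~\ref{CompThP} means $K_1(\Omega)<\infty$. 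Thus both endpoint dilatations $\ess\sup_x |D\varphi(x)|/|J(x,\varphi)|$ and $\ess\sup_x |D\varphi(x)|^{n-1}/|J(x,\varphi)|$ are finite, and the task is to propagate finiteness to every intermediate exponent.

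To run the interpolation I would pass from the Sobolev composition operator to a linear operator on Lebesgue spaces. Since $\varphi\in W^1_{1,\loc}(\Omega)$ has finite distortion, the chain rule yields the pointwise majorant $|\nabla(f\circ\varphi)(x)|\le |D\varphi(x)|\,|\nabla f|(\varphi(x))$ for almost all $x\in\Omega$ and every $f\in L^1_q(\widetilde{\Omega})$. Writing $g=|\nabla f|\in L_q(\widetilde{\Omega})$ and introducing the weighted composition operator $Tg(x)=|D\varphi(x)|\,(g\circ\varphi)(x)$, one has $\|\varphi^{\ast}f\mid L^1_q(\Omega)\|\le \|Tg\mid L_q(\Omega)\|$, so it suffices to show that $T\colon L_q(\widetilde{\Omega})\to L_q(\Omega)$ is bounded. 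The operator $T$ is linear and positive, and a change of variables $y=\varphi(x)$ (legitimate because $\varphi$ and $\varphi^{-1}$ enjoy the Luzin $N$- and $N^{-1}$-properties) identifies its norm with $\bigl(\ess\sup_x |D\varphi(x)|^q/|J(x,\varphi)|\bigr)^{1/q}=K_q(\Omega)$; in particular $\|T\|_{L_1\to L_1}=K_1(\Omega)$ and $\|T\|_{L_{n-1}\to L_{n-1}}=K_{n-1}(\Omega)$ are finite by the previous paragraph. Since strong-type endpoints are in particular of weak type, the Marcinkiewicz interpolation theorem delivers the strong-type $(q,q)$ bound for every $q\in(1,n-1)$, hence boundedness of $\varphi^{\ast}$ on $L^1_q$ throughout $[1,n-1]$.

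The estimate underlying the interpolation is in fact a pointwise log-convexity of the dilatation, which I expect to be the cleanest way to organize the bookkeeping: writing $q=(1-\theta)\cdot 1+\theta(n-1)$ with $\theta=(q-1)/(n-2)$, one has the exact identity
\[
\frac{|D\varphi(x)|^q}{|J(x,\varphi)|}=\left(\frac{|D\varphi(x)|}{|J(x,\varphi)|}\right)^{1-\theta}\left(\frac{|D\varphi(x)|^{n-1}}{|J(x,\varphi)|}\right)^{\theta}
\]
almost everywhere, whence $\ess\sup_x |D\varphi(x)|^q/|J(x,\varphi)|\le K_1(\Omega)^{1-\theta}K_{n-1}(\Omega)^{(n-1)\theta}<\infty$, so $K_q(\Omega)<\infty$ and Theorem~\ref{CompThP} again yields the claim. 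The main obstacle is not the interpolation but the verification of its hypotheses under the weak regularity at hand: one must confirm that $f\circ\varphi$ is weakly differentiable with the stated chain-rule majorant for Sobolev $f$ and a finite-distortion homeomorphism $\varphi$, and that the change of variables identifying $\|T\|$ with $K_q(\Omega)$ is valid. These are exactly the facts furnished by the composition-operator theory recalled above (Theorems~\ref{CompTh} and~\ref{CompThP}), so once they are invoked the remaining exponent computation is routine.
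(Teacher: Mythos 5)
Your proposal is correct, and its first two paragraphs reproduce essentially the paper's own (very terse) argument: the paper proves this theorem in a single sentence, interpolating via the Marcinkiewicz theorem between the endpoint $q=n-1$, which is just the hypothesis (through Theorem~\ref{CompThP}), and the endpoint $q=1$ supplied by Corollary~\ref{oneone}. Where you go beyond the paper is your third paragraph, and it is worth noting that this part alone is a complete and more elementary proof: writing $q=(1-\theta)\cdot 1+\theta(n-1)$, $\theta=(q-1)/(n-2)$, the exact pointwise identity
\[
\frac{|D\varphi(x)|^{q}}{|J(x,\varphi)|}=\left(\frac{|D\varphi(x)|}{|J(x,\varphi)|}\right)^{1-\theta}\left(\frac{|D\varphi(x)|^{n-1}}{|J(x,\varphi)|}\right)^{\theta}
\]
shows that finiteness of $K_1(\Omega)$ (Corollary~\ref{oneone} combined with Theorem~\ref{CompTh}) and of $K_{n-1}(\Omega)$ (the hypothesis) immediately gives $K_q^q(\Omega)\leq K_1(\Omega)^{1-\theta}K_{n-1}(\Omega)^{(n-1)\theta}<\infty$, and Theorem~\ref{CompTh} converts this back into boundedness of $\varphi^{\ast}$ on $L^1_q$ (the hypotheses $\varphi\in W^1_{1,\loc}$ and finite distortion hold by the definition of weak $(n-1)$-quasiconformality). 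This route bypasses the reduction to the weighted operator $Tg=|D\varphi|\,(g\circ\varphi)$, which is the only delicate point in your Marcinkiewicz argument: there you must justify the chain-rule majorant and the change of variables for a finite-distortion homeomorphism, and your claim that $\|T\|$ \emph{equals} $K_q(\Omega)$ is more than the change of variables gives in one step (only the upper bound $\|T\|\leq K_q(\Omega)$ is needed, and only it is immediate). Since Theorem~\ref{CompTh} already packages all the measure-theoretic subtleties, the log-convexity argument is the cleanest way to organize the proof, and it also yields an explicit interpolated bound on the operator norm that neither the paper's one-line proof nor Marcinkiewicz interpolation (with its non-sharp constants) makes visible.
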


\medskip

Note, that another approach to the theory of mappings of finite distortion is based on the notion of modules 
( see, for example, \cite{GRSY, MRSY, S16})

\medskip

{\bf Acknowledgment.} 
The financial support by United States-Israel  Binational Science foundation (Grant 2014055) is gratefully acknowledged.

\end{document}